\newtheorem{thm}{Theorem}[section]
\newtheorem{lem}{Lemma}[section]
\newtheorem{prop}{Proposition}[section]
\theoremstyle{definition}
\theoremstyle{remark}
\newtheorem{rem}{Remark}[section]
\numberwithin{equation}{section}
\newcommand{\Kcal}{\mathcal{K}}
\newcommand{\Scal}{\mathcal{S}}
\newcommand{\Dcal}{\mathcal{D}}
\newcommand{\Acal}{\mathcal{A}}
\newcommand{\Rcal}{\mathcal{R}}
\newcommand{\bu}{\mathbf{u}}
\newcommand{\bff}{\mathbf{f}}
\newcommand{\bF}{\mathbf{F}}
\newcommand{\be}{\mathbf{e}}
\newcommand{\bx}{\mathbf{x}}
\newcommand{\by}{\mathbf{y}}
\newcommand{\bz}{\mathbf{z}}
\newcommand{\bGamma}{\mathbf{\Gamma}}
\newcommand{\bpsi}{\bm{\psi}}
\newcommand{\bvarphi}{\bm{\varphi}}
\newcommand{\bphi}{\bm{\phi}}
\newcommand{\balpha}{\bm{\alpha}}
\newcommand{\bbeta}{\bm{\beta}}
\newcommand{\bnu}{\bm{\nu}}
\newcommand{\Bx}{\mathbf{x}}
\newcommand{\By}{\mathbf{y}}
\newcommand{\tdx}{\tilde{\Bx}}
\newcommand{\tdy}{\tilde{\By}}
\newcommand{\RR}{\mathbb{R}}
\newcommand{\eqnref}[1]{(\ref {#1})}
\newcommand{\p}{\partial}
\newcommand{\beq}{\begin{equation}}
\newcommand{\eeq}{\end{equation}}
\title[Polariton resonance in elastic system]{Analysis of surface polariton resonance for nanoparticles in elastic system}
\author{Youjun Deng}
\address{School of Mathematics and Statistics, Central South University, Changsha, Hunan, P. R. China.}
\email{youjundeng@csu.edu.cn; dengyijun\_001@163.com}
\author{Hongjie Li}
\address{Department of Mathematics, Hong Kong Baptist University, Kowloon Tong, Hong Kong SAR, P. R. China.}
\email{hongjie$_-$li@yeah.net}
\author{Hongyu Liu}
\address{Department of Mathematics, Hong Kong Baptist University, Kowloon Tong, Hong Kong SAR, P. R. China.}
\email{hongyu.liuip@gmail.com; hongyuliu@hkbu.edu.hk}
\begin{document}
\maketitle

\begin{abstract}

This paper is concerned with the analysis of surface polariton resonance for nanoparticles in linear elasticity. With the presence of nanoparticles, we first derive the perturbed displacement field associated to a given elastic source field. It is shown that the leading-order term of the perturbed elastic wave field is determined by the Neumann-Poinc\'are operator associated to the Lam\'e system. By analyzing the spectral properties of the aforesaid Neumann-Poinc\'are operator, we study the polariton resonance for the elastic system. The results may find applications in elastic wave imaging.

\medskip

\medskip

\noindent{\bf Keywords:}~~elastic scattering, negative materials, surface polariton resonance, asymptotic and spectral analysis

\noindent{\bf 2010 Mathematics Subject Classification:}~~35B34; 74E99; 74J20

\end{abstract}

\section{Introduction}
Recently, the mathematical study of surface plasmon resonances of nanoparticles have received considerable attention in the literature for their various applications including biomedicine imaging \cite{ADM, APRY18, JLEE06}, near-field microscopy \cite{BL2002,BLBCL2006}, molecular recognition \cite{HF04}, and heat therapeutic applications \cite{ARRu18, BGQu10, FDLi15}. Plasmonic materials are artificially engineered metamaterials that exhibit negative or left-handed optical properties. V. G. Veselago was the first to theoretically investigate the electrodynamics of substances with simultaneously negative permittivity $\varepsilon$ and magnetic permeability $\mu$ \cite{Ves}. In \cite{JP}, J. B. Pendry extended the work by V. G. Veselago and gave the theoretical construction of a perfect lens. Plasmonic materials have also been used to create invisibility cloaking \cite{Alu,ACKL13, ACKL14, AKLi16, KLO, KLSW14,La,LLLi15,NMMB07, MiNi06}. Negative refractive indexed materials have been fabricated in the lab \cite{Le}. Plasmonic materials are metals or metal-like materials, and at the resonance frequencies, the incident light couples with the surface plasmons to create a significant field enhancement around the nanoparticles which can be much shorter in wavelength and known as surface plasmon polaritons.

In this paper, we consider elastic scattering from nanoparticles with possibly negative material parameters. In metamaterials, exotic elastic materials with negative material parameters have been artificially engineered; see e.g. \cite{KM} and \cite{LLBW}. The so-called anomalous localized resonance and its associated cloaking effects have been intensively investigated for certain elastic polariton structures of the core-shell form within spherical and elliptical geometries \cite{AKKY,AKan16,AKM1,DLLi18,LLi16,LLi17,LLL2}, which are closely related to our current study. In this paper, we are mainly concerned with the analysis of surface polariton resonance for nanoparticles in linear elasticity. The surface plasmon resonance for nanoparticles in electromagnetism and its application in electromagnetic imaging were recently studied in \cite{ADM} by one of the authors of the present paper. Inspired by the study in \cite{ADM}, we make use of the layer potential techniques to reduce the elastic scattering problem into a system of integral equations. Indeed, the kind of techniques has been extensively used in the aforementioned works on plasmon resonances in optics and elastic cloaking. The resonance frequencies are closely related to the spectra of the involved integral operators, referred to as the Neumann-Poincar\'e-type operators. However, the spectral properties of the  Neumann-Poincar\'e operators in elasticity are much more complicated and delicate to analyze than those for the optical problems; see e.g. \cite{DLLi18,AKM1}. For the elastic scattering from nanoparticles, by performing asymptotic analysis, we derive that the leading-order term is determined by the Neumann-Poincar\'e operator associated to the elastic system. Then we show the polariton resonance by analyzing the spectral properties of the aforementioned Neumann-Poincar\'e operator. Similar to \cite{ADM}, the corresponding resonance result may find applications in elastic wave imaging.

The remaining part of this section is devoted to the description of the mathematical setup and the elastic scattering problem for our study. Let $D$ denote the nanoparticles that we are concerned with. It is assumed that $D$ is of the form
\begin{equation}\label{eq:form1}
D=\delta B + \bz,
\end{equation}
where $\delta\in\mathbb{R}_+$, $\bz\in\mathbb{R}^3$ and $B$ is a bounded domain containing the origin with a $C^\infty$ smooth boundary $\partial B$ and a connected complement $\mathbb{R}^3\backslash\overline{B}$. The elastic medium configuration in $\mathbb{R}^3$ is characterized by the Lam\'e parameters $\tilde{\lambda}$ and $\tilde{\mu}$, which are given as follows
\begin{equation}\label{eq:def01}
  (\tilde{\lambda},\tilde{\mu})= A(\bx)(\lambda,\mu),
\end{equation}
where
\begin{equation}\label{eq:def02}
A(\bx)=
  \left\{
    \begin{array}{ll}
      c, & \bx\in D,\medskip \\
      1, & \bx\in \mathbb{R}^3\backslash \overline{D},
    \end{array}
  \right.
\end{equation}
with $c\in\mathbb{C}$ and $\Im c \geq 0$, and $\lambda, \mu$ two constants satisfying the following strong convexity condition,
\begin{equation}\label{eq:convexity condition}
  \mu>0 \quad \mbox{and} \quad 3\lambda + 2\mu>0.
\end{equation}
It is noted that $B$ may have multiple connected components, corresponding to the case that there are multiple nanoparticles. Associated with the elastic medium configuration described above in $\mathbb{R}^3$, the elastic wave scattering is governed by the following Lam\'e system,
\begin{equation}\label{eq:general}
  \mathcal{L}_{\tilde{\lambda},\tilde{\mu}} \mathbf{u} + \omega^2 \mathbf{u}=\mathbf{f} \quad \mbox{in} \quad \mathbb{R}^3,
\end{equation}
where $\mathbf{u}\in H_{loc}^1(\mathbb{R}^3)^3$ and the the Lam\'e operator is defined by
\[
 \mathcal{L}_{\tilde{\lambda},\tilde{\mu}} \mathbf{u}:=\tilde{\mu}\Delta\mathbf{u}(\mathbf{x})+(\tilde{\lambda}+\tilde{\mu})\nabla\nabla\cdot\mathbf{u}(\mathbf{x}).
\]
Here we suppose that the source term $\mathbf{f}\in H^{-1}(\mathbb{R}^3)^3$ is compactly supported in $\RR^3\setminus\overline{D}$. To complete the description of the elastic system, we impose the following radiating condition for $\bu(\bx)$ in \eqref{eq:general} as $|\mathbf{x}|\rightarrow+\infty$,
\begin{equation}\label{eq:radiating}
\begin{split}
(\nabla\times\nabla\times\mathbf{u})(\bx)\times\frac{\bx}{|\bx|}-\mathrm{i}k_T\nabla\times\mathbf{u}(\bx)=&\mathcal{O}(|\bx|^{-2}),\\
\frac{\bx}{|\bx|}\cdot[\nabla(\nabla\cdot\mathbf{u})](\bx)-\mathrm{i}k_L\nabla\mathbf{u}(\bx)=&\mathcal{O}(|\bx|^{-2}),
\end{split}
\end{equation}
where $k_T:=\omega/\sqrt{\mu}$, $k_L:=\omega/\sqrt{\lambda+2\mu}$ and $\mathrm{i}:=\sqrt{-1}$ is the imaginary unit.

The rest of the paper is organized as follows. In Section 2, we first present the layer potential techniques for the elastic scattering problem and obtain the integral representation of the solution to the Lam\'e system. Then we derive the asymptotic expansion of the elastic wave field with respect to the size parameter of the nanoparticle. Section 3 is devoted to the resolvent analysis of the Neumann-Poicar\'e operator derived in Section 2 and the mathematical analysis of the surface polariton resonance in the elastic system.

\section{Integral reformulation and asymptotic analysis}

\subsection{Layer potential techniques}

In this subsection, we present some preliminary knowledge on layer potential techniques for the Lam\'e system for our subsequent use.
To begin with, we introduce the Kupradze matrix $\mathbf{\Gamma}^{\omega}$, which is the fundamental solution to the PDO (partial differential operator) $\mathcal{L}_{\lambda,\mu} + \omega^2$ as follows (cf. \cite{ABGK}),
\begin{equation}\label{eq:expression_gamma}
  \mathbf{\Gamma}^{\omega} = -\frac{e^{\frac{\mathrm{i}\omega|\bx|}{c_T}}}{4\pi\mu|\bx|}\mathbf{I} + \frac{1}{4\pi\omega^2}D^2 \Big(\frac{e^{\frac{\mathrm{i}\omega|\bx|}{c_T}} - e^{\frac{\mathrm{i}\omega|\bx|}{c_L}}}{|\bx|}\Big),
\end{equation}
where $\mathbf{I}$ is the $3\times 3$ identity matrix, $D^2$ denotes the standard double differentiation, and
\begin{equation}\label{eq:definition_ct_cl}
  c_T=\sqrt{\mu} , \quad c_{L}=\sqrt{\lambda + 2 \mu}.
\end{equation}
If $\omega=0$, $\mathbf{\Gamma}^0$ is the Kelvin matrix of the fundamental solution to the elastostatic system, and it is given by
\begin{equation}\label{eq:fundal0}
  \mathbf{\Gamma}^0(\bx)=-\frac{\gamma_1}{4\pi} \frac{1}{|\bx|}\mathbf{I} -\frac{\gamma_2}{4\pi} \frac{\bx \bx^T}{|\bx|^3},
\end{equation}
where the superscript $T$ stands for the transpose of a matrix and
\begin{equation}
  \gamma_1=\frac{1}{2}\left( \frac{1}{\mu} + \frac{1}{2\mu+\lambda} \right) \quad \mbox{and} \quad \gamma_2=\frac{1}{2}\left( \frac{1}{\mu} - \frac{1}{2\mu+\lambda}\right).
\end{equation}
Based on the above notations, the single and double layer potentials can then be defined by
\begin{equation}
  \Scal^{\omega}_{D}[\bvarphi](\bx) = \int_{\partial D} \mathbf{\Gamma}^{\omega}(\bx-\by)\bvarphi(\by)ds(\by), \quad \bx\in\mathbb{R}^3,
\end{equation}
\begin{equation}
  \Dcal^{\omega}_{D}[\bvarphi](\bx) = \int_{\partial D} \frac{\partial}{\partial\bnu_\by} \mathbf{\Gamma}^{\omega}(\bx-\by)\bvarphi(\by)ds(\by), \quad \bx \in\mathbb{R}^3\backslash\partial D,
\end{equation}
for any $\bvarphi\in H^{-1/2}(\partial D)^3$, where and also in what follows, the conormal derivative on $\partial D$ for a function $\bu$ is defined by
\begin{equation}
  \frac{\partial \bu}{\partial\bnu} = \lambda(\nabla \cdot \bu)\bnu + \mu(\nabla \bu + \nabla \bu^T)\bnu,
\end{equation}
with $\bnu$ signifying the exterior unit normal vector to the boundary $\partial D$. There hold the following jump relations for the double layer potential and the conormal derivative of the single layer potential on the boundary (cf. \cite{ABGK}),
\begin{equation}\label{eq:jump_single}
  \frac{\partial}{\partial \bnu} \Scal^{\omega}_D[\bvarphi]|_{\pm}(\bx)=\left( \pm \frac{1}{2}I + (\Kcal^\omega_D)^* \right) [\bvarphi](\bx), \quad \mbox{a.e.} \; \bx\in \partial D,
\end{equation}
\begin{equation}
  \Dcal^{\omega}_D[\bvarphi]|_{\pm}(\bx)=\left( \mp \frac{1}{2}I + \Kcal^\omega_D \right) [\bvarphi](\bx), \quad \mbox{a.e.} \; \bx\in \partial D,
\end{equation}
where $I$ is the identity operator and $\Kcal^{\omega}_D$ is the operator defined by
\begin{equation}
  \Kcal^{\omega}_{D}[\bvarphi](\bx)=\mathrm{p.v.}\int_{\partial D} \frac{\partial}{\partial\bnu_\by} \mathbf{\Gamma}^{\omega}(\bx-\by)\bvarphi(\by)ds(\by),
\end{equation}
and $(\Kcal^{\omega}_D)^*$, also called the Neumann-Poinc\'are (NP) operator, is given as follows
\begin{equation}
  (\Kcal^{\omega}_D)^*[\bvarphi](\bx)=\mathrm{p.v.}\int_{\partial D} \frac{\partial}{\partial\bnu_\bx} \mathbf{\Gamma}^{\omega}(\bx-\by)\bvarphi(\by)ds(\by),
\end{equation}
where p.v. means the Cauchy principal value. In what follows, when $\omega=0$, we denote $\Scal^{0}_{D}, \Dcal^{0}_{D}, \Kcal^{0}_{D}$, and $(\Kcal^{0}_{D})^*$ by $\Scal_{D}, \Dcal_{D}, \Kcal_{D}$, and $\Kcal^*_{D}$ for simplicity.

With the above preparations, one has the following integral representation of the solution to the elastic system \eqnref{eq:def01}-\eqref{eq:radiating} with $(\bvarphi,\bpsi)\in H^{-1/2}(\partial D)^3\times H^{-1/2}(\partial D)^3$ (cf. \cite{ABGK}),
\begin{equation}\label{eq:general_solution}
 \bu(\bx)=
  \left\{
    \begin{array}{ll}
      \Scal^{\omega_1}_D[\bvarphi](\bx), & \bx\in D,\medskip \\
      \Scal^{\omega_2}_D[\bpsi](\bx) + \bF(\bx), &  \bx\in \mathbb{R}^3\backslash \overline{D},
    \end{array}
  \right.
\end{equation}
where
\begin{equation}\label{eq:souce}
  \bF(x) = \int_{\mathbb{R}^3} \bGamma^{\omega_2}(\bx-\by)\bff(\by)d\by,
\end{equation}
\begin{equation}
  \omega_1 = \frac{\omega}{\sqrt{c}} \quad \mbox{with}\quad \Re \omega_1>0, \ \Im \omega_1\leq 0,
\end{equation}
and
\[
 \omega_2 = \omega.
\]
It can be verified that
\begin{equation}\label{eq:LF}
  \mathcal{L}_{\lambda,\mu} \mathbf{F} + \omega^2 \mathbf{F}=\mathbf{f} \quad \mbox{in} \quad \mathbb{R}^3,
\end{equation}
By using the transmission conditions across the boundary $\p D$, the solution $\bu(\bx)$ in \eqref{eq:general_solution} should satisfy
\begin{equation}
  \left\{
    \begin{array}{ll}
      \Scal^{\omega_1}_D[\bvarphi]|_{-} = (\Scal^{\omega_2}_D[\bpsi](\bx) + \bF(\bx))|_{+}, & \bx\in \partial D,\medskip \\
      \frac{\partial}{\partial \bnu}\Scal^{\omega_1}_D[\bvarphi]|_{-} =  \frac{\partial}{\partial \bnu}(\Scal^{\omega_2}_D[\bpsi](\bx) + \bF(\bx))|_{+}, & \bx\in \partial D.
    \end{array}
  \right.
\end{equation}
With the help of the jump relation \eqref{eq:jump_single}, the pair $(\bvarphi, \bpsi) $ is the solution to the following system of integral equations,
\begin{equation}\label{eq:transmission condition}
  \left[
    \begin{array}{cc}
       \Scal^{\omega_1}_D & -\Scal^{\omega_2}_D\medskip \\
      c\left(-\frac{I}{2} + (\Kcal^{\omega_1}_D)^*\right) & -\frac{I}{2} - (\Kcal^{\omega_2}_D)^* \\
    \end{array}
  \right]
 \left[
   \begin{array}{c}
     \bvarphi\medskip \\
     \bpsi \\
   \end{array}
 \right]= \left.
 \left[
   \begin{array}{c}
     \bF\medskip \\
     \frac{\partial\bF}{\partial \bnu} \\
   \end{array}
 \right] \right|_{\partial D}.
\end{equation}

\subsection{Asymptotics for the integral operators}
In this subsection we derive the asymptotic properties for the operators $\Scal^{\omega}_D$ and $(\Kcal^{\omega}_D)^*$ involved in the integral reformulation of the elastic system in Subsection~2.1. We suppose that $\delta\in\mathbb{R}_+$ is sufficiently small, that is, the size of the nanoparticle $D$ is small enough. First of all, we introduce some notations. For a multi-index $\balpha\in\mathbb{N}^3$, let $\bx^{\balpha} = \bx^{\balpha_1}_1 \bx^{\balpha_2}_2 \bx^{\balpha_3}_3$ and $\partial^{\balpha}=\partial^{\balpha_1}_1 \partial^{\balpha_2}_2 \partial^{\balpha_2}_2$, with $\partial_j=\partial/\partial \bx_j$. We also define by $\mathbf{e}_j$, $j=1, 2, 3$ the unit coordinate vectors in $\RR^3$.
Since $D=\delta B + \bz$, for any $\by\in \partial D$, we let $\tilde{\by}=(\by-\bz)/\delta\in \partial B$. Denote by $\widetilde{\bvarphi}(\widetilde{\by})=\bvarphi(\by)$ and $\widetilde{\bpsi}(\widetilde{\by})=\bpsi(\by)$, and let $\partial/\partial\widetilde{\bnu}$ be the conormal derivative on the boundary $\partial B$.

\begin{lem}
Suppose $\delta\in\mathbb{R}_+$ and $\delta\ll 1$, then there holds the following asymptotic expansion for $\bx\in\mathbb{R}^3$
\begin{equation}\label{eq:fund_solution_expansion}
 \begin{split}
  \mathbf{\Gamma}^{\delta}(\bx) = & -\frac{1}{4\pi}\sum_{n=0}^{+\infty}\frac{\mathrm{i}^n}{(n+2)n!}\left(\frac{n+1}{c^{n+2}_T} + \frac{1}{c^{n+2}_L} \right) \delta^n|\bx|^{n-1}\mathbf{I} \\
     & +\frac{1}{4\pi}\sum_{n=0}^{+\infty}\frac{\mathrm{i}^n(n-1)}{(n+2)n!}\left(\frac{1}{c^{n+2}_T} - \frac{1}{c^{n+2}_L} \right) \delta^n |\bx|^{n-3}\bx\bx^T.
 \end{split}
\end{equation}
\end{lem}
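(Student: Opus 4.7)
The plan is to substitute $\omega=\delta$ into the Kupradze matrix formula \eqref{eq:expression_gamma} and Taylor-expand each exponential $e^{\mathrm{i}\delta|\bx|/c}$ in powers of $\delta$, then reorganize the resulting series. For the scalar first term $-e^{\mathrm{i}\delta|\bx|/c_T}/(4\pi\mu|\bx|)\,\mathbf{I}$, a direct power-series expansion combined with the identity $\mu=c_T^2$ yields a contribution of the form $-\frac{1}{4\pi}\sum_n \frac{\mathrm{i}^n\delta^n}{n!\,c_T^{n+2}}|\bx|^{n-1}\mathbf{I}$, which I would rewrite with denominator $(n+2)n!$ (introducing an extra factor of $n+2$ in the numerator) so that it can be combined cleanly with the $D^2$-contribution below.

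For the second term, I would first expand the scalar $\frac{e^{\mathrm{i}\delta|\bx|/c_T}-e^{\mathrm{i}\delta|\bx|/c_L}}{|\bx|}$ as $\sum_{n=0}^\infty \frac{\mathrm{i}^n\delta^n}{n!}\bigl(c_T^{-n}-c_L^{-n}\bigr)|\bx|^{n-1}$, noting that the $n=0$ term vanishes since $c_T^{0}=c_L^{0}=1$. Differentiating term by term using the elementary identity $D^2(|\bx|^k)=k(k-2)|\bx|^{k-4}\bx\bx^T+k|\bx|^{k-2}\mathbf{I}$ at $k=n-1$ eliminates also the $n=1$ contribution through the overall factor $n-1$. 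Consequently the effective series is indexed by $n\ge 2$, which is precisely the order needed for the prefactor $1/\omega^2$ in \eqref{eq:expression_gamma} to leave a genuine power series in nonnegative powers of $\delta$. A reindexing $m=n-2$, together with the identities $\mathrm{i}^{m+2}=-\mathrm{i}^m$ and $(m+1)/(m+2)!=1/[(m+2)m!]$, recasts the $D^2$-contribution as two series: an $\mathbf{I}$-type series with coefficient proportional to $c_T^{-(n+2)}-c_L^{-(n+2)}$, and a $\bx\bx^T$-type series with an additional factor $n-1$, the latter already matching the claimed $\bx\bx^T$-coefficient.

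Finally I would add the $\mathbf{I}$-contribution from the first term to the $\mathbf{I}$-contribution from the $D^2$-term; after the common factor $1/[(n+2)n!]$ is extracted, the $c_T$-coefficients combine as $(n+2)c_T^{-(n+2)}-c_T^{-(n+2)}=(n+1)c_T^{-(n+2)}$, while the $c_L$-coefficient contributes $+c_L^{-(n+2)}$, recovering exactly the stated $\mathbf{I}$-coefficient $\frac{n+1}{c_T^{n+2}}+\frac{1}{c_L^{n+2}}$. The main obstacle is the careful bookkeeping of signs, index shifts, and combinatorial simplifications, and in particular verifying that the apparent singularity $1/\delta^2$ is cancelled by the automatic vanishing of both the $n=0$ and $n=1$ terms. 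As a sanity check, setting $n=0$ in the final expansion and using $c_T^2=\mu$, $c_L^2=\lambda+2\mu$ should reproduce the Kelvin matrix \eqref{eq:fundal0}, which confirms the overall normalization of the series.
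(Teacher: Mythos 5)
Your approach is the same as the paper's: the paper's proof is literally the one-line statement ``by using \eqref{eq:expression_gamma}, Taylor series expansion with respect to $\delta$, and straightforward computations'', and what you have written out \emph{is} that straightforward computation — expand both exponentials, apply $D^2$ using $D^2(|\bx|^k)=k(k-2)|\bx|^{k-4}\bx\bx^T+k|\bx|^{k-2}\mathbf{I}$, observe that the $n=0$ term dies because $c_T^{0}-c_L^{0}=0$ and the $n=1$ term dies through the factor $n-1$, shift the index by two to absorb the $1/\omega^2$, and use $\mathrm{i}^{m+2}=-\mathrm{i}^m$ and $(m+1)/(m+2)!=1/[(m+2)m!]$ to normalize. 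The $n=0$ cross-check against the Kelvin matrix \eqref{eq:fundal0} is exactly the right sanity test.

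One thing you should make explicit, however, is the sign bookkeeping on the $D^2$ block, because carried out literally from \eqref{eq:expression_gamma} it does \emph{not} reproduce the stated coefficients. After the reindexing, $\mathrm{i}^{m+2}=-\mathrm{i}^m$ flips the sign of the \emph{entire} $D^2$ contribution; since \eqref{eq:expression_gamma} carries a prefactor $+\frac{1}{4\pi\omega^2}$, the reindexed $D^2$ block comes out as $-\frac{1}{4\pi}\sum_m \frac{\mathrm{i}^m\delta^m}{(m+2)m!}(c_T^{-(m+2)}-c_L^{-(m+2)})\big[(m-1)|\bx|^{m-3}\bx\bx^T+|\bx|^{m-1}\mathbf{I}\big]$. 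Added to the first-term $\mathbf{I}$-series, that gives the $c_T$-coefficient $(n+2)+1=n+3$ (not $n+1$), the $c_L$-coefficient $-c_L^{-(n+2)}$ (not $+c_L^{-(n+2)}$), and the opposite sign on the $\bx\bx^T$-series — precisely the sanity-check failure at $n=0$ that your last paragraph is designed to catch ($+\gamma_2/(4\pi)$ instead of the Kelvin matrix's $-\gamma_2/(4\pi)$). Your stated combination $(n+2)-1=n+1$ silently uses a $-\frac{1}{4\pi\omega^2}D^2$ prefactor in \eqref{eq:expression_gamma}; this is the version that is actually consistent both with the Kelvin matrix \eqref{eq:fundal0} and with $\mathbf{\Lambda}$ in \eqref{eq:definition_Lambda} (and one can check directly that only with that sign does $(\mathcal{L}_{\lambda,\mu}+\omega^2)\mathbf{\Gamma}^\omega=\delta_0\mathbf{I}$ hold). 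So the discrepancy is a sign slip in the paper's formula \eqref{eq:expression_gamma} rather than a flaw in your method, but since your proof cites \eqref{eq:expression_gamma} as its starting point, you should flag the needed sign correction rather than letting the $n=0$ check appear to pass automatically.
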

\begin{proof}
By using \eqnref{eq:expression_gamma} and Taylor series expansion with respect to $\delta$ and straightforward computations, one can obtain \eqnref{eq:fund_solution_expansion}.

The proof is complete.
\end{proof}
\begin{prop}\label{prop:expansion_single_layer}
  Let $\bvarphi\in H^{-1/2}(\partial D)^3$. For $\delta\in\mathbb{R}_+$ and $\delta\ll 1$, there holds
\begin{equation}
 \Scal^{\omega}_D[\bvarphi](\bx)= \delta \Scal_B[\widetilde{\bvarphi}](\widetilde{\bx}) + \delta^2\omega\Rcal_B[\widetilde{\bvarphi}](\widetilde{\bx}) + \delta^3\omega^2\mathcal{I}_B [\widetilde{\bvarphi}](\widetilde{\bx})+ \mathcal{O}(\delta^4),
\end{equation}
where
\begin{equation}\label{eq:definition_operator_R}
  \Rcal_B[\widetilde{\bvarphi}](\widetilde{\bx})=\gamma_3\int_{\partial B}\widetilde{\bvarphi}(\widetilde{\by})ds(\widetilde{\by}),
\end{equation}
with
\[
\gamma_3=\frac{-\mathrm{i}}{12\pi}\left(\frac{2}{c_{T}^3}+c_{L}^3\right),
\]
and
\begin{equation}\label{eq:definition_operator_I}
  \mathcal{I}_B[\widetilde{\bvarphi}](\widetilde{\bx})=\int_{\partial B}{\mathbf{\Lambda}(\widetilde{\bx}-\widetilde{\by})} \widetilde{\bvarphi}(\widetilde{\by})ds(\widetilde{\by}),
\end{equation}
with
\beq\label{eq:definition_Lambda}
 \mathbf{\Lambda}=\frac{1}{32\pi}\left(\frac{3}{c^{4}_T} + \frac{1}{c^{4}_L} \right)|\bx|\mathbf{I}-\frac{1}{32\pi}\left(\frac{1}{c^{4}_T} - \frac{1}{c^{4}_L} \right)\frac{\bx\bx^T}{|\bx|},
\eeq
and $c_T$ and $c_L$ defined in \eqref{eq:definition_ct_cl}.
\end{prop}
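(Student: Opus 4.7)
The plan is to transport the integral onto the fixed reference domain $B$ by the natural rescaling $\bx=\delta\widetilde{\bx}+\bz$, $\by=\delta\widetilde{\by}+\bz$, and then substitute the small-parameter expansion of the Kupradze matrix supplied by the preceding lemma. First I would establish the scaling identity
\begin{equation*}
\mathbf{\Gamma}^{\omega}(\delta\bz')=\delta^{-1}\,\mathbf{\Gamma}^{\omega\delta}(\bz'),\qquad \bz'\in\mathbb{R}^3\setminus\{0\},
\end{equation*}
which follows by direct inspection of \eqref{eq:expression_gamma}: the first term scales as $\delta^{-1}$ outright, while in the second term the derivative $D^2$ contributes $\delta^{-2}$ and the factor $1/|\bx|$ contributes a further $\delta^{-1}$, and the passage from frequency $\omega$ to frequency $\omega\delta$ precisely absorbs the extra $\delta^{-2}$ coming from the $1/\omega^2$ prefactor. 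Combined with $ds(\by)=\delta^2\,ds(\widetilde{\by})$ this yields the exact identity
\begin{equation*}
\Scal^{\omega}_{D}[\bvarphi](\bx)=\delta\,\Scal^{\omega\delta}_{B}[\widetilde{\bvarphi}](\widetilde{\bx}).
\end{equation*}

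Next I would invoke Lemma~2.1, read as the Taylor expansion of $\mathbf{\Gamma}^{k}(\bz)$ in powers of $k$, applied with $k=\omega\delta$. The $n=0$ coefficient collapses to the Kelvin matrix $\mathbf{\Gamma}^0$ (using the identities $c_T^{-2}+c_L^{-2}=2\gamma_1$ and $c_T^{-2}-c_L^{-2}=2\gamma_2$), which yields the leading term $\delta\,\Scal_B[\widetilde{\bvarphi}](\widetilde{\bx})$. In the $n=1$ slice the second sum drops out because of the factor $(n-1)$, leaving only a constant multiple of the identity matrix which is exactly $\gamma_3\mathbf{I}$; pairing this against $\widetilde{\bvarphi}$ and integrating produces $\delta^2\omega\,\Rcal_B[\widetilde{\bvarphi}](\widetilde{\bx})$. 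The $n=2$ slice can be read off directly and assembles into precisely the tensor $\mathbf{\Lambda}$ of \eqref{eq:definition_Lambda}, giving $\delta^3\omega^2\,\mathcal{I}_B[\widetilde{\bvarphi}](\widetilde{\bx})$.

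The only remaining point is the remainder. The tail $\sum_{n\geq 3}(\omega\delta)^n\mathbf{\Gamma}_n(\widetilde{\bx}-\widetilde{\by})$ is governed by kernels of the form $|\widetilde{\bx}-\widetilde{\by}|^{n-1}\mathbf{I}$ and $|\widetilde{\bx}-\widetilde{\by}|^{n-3}(\widetilde{\bx}-\widetilde{\by})(\widetilde{\bx}-\widetilde{\by})^T$, which for $n\geq 3$ are continuous on the compact product set $\partial B\times\partial B$; the series converges absolutely in $L^\infty$ for $\delta$ small, and the corresponding tail operator is bounded on $H^{-1/2}(\partial B)^3$ with norm $\mathcal{O}(\delta^3)$. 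Pulling the outer factor $\delta$ through yields the advertised $\mathcal{O}(\delta^4)$ remainder. The chief obstacle is not the calculation itself but the clean bookkeeping of the $n=1$ and $n=2$ coefficients—specifically, verifying that the $\bx\bx^T$ contribution vanishes at order $n=1$ (which is what makes $\Rcal_B$ collapse to a scalar integral against $\mathbf{I}$) and that the three pieces at order $n=2$ reassemble exactly into $\mathbf{\Lambda}$.
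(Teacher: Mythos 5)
Your proposal is correct and follows essentially the same route as the paper: establish the exact scaling identity $\Scal^{\omega}_{D}[\bvarphi](\bx)=\delta\,\Scal^{\delta\omega}_{B}[\widetilde{\bvarphi}](\widetilde{\bx})$ via the change of variables (which is precisely \eqref{eq:singdb}), then substitute the series \eqref{eq:fund_solution_expansion} and read off the $n=0,1,2$ terms. You are in fact more explicit than the paper in verifying the coefficients and in bounding the tail; the only quibble is that the $n=2$ slice consists of two pieces rather than ``three,'' and note in passing that $\gamma_3$ in \eqref{eq:definition_operator_R} contains a typographical slip ($c_L^3$ should read $1/c_L^3$), which your computation reproduces correctly.
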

\begin{proof}
  Let $\bx\in\partial D$ and denote $\widetilde{\bx}=(\bx-\bz)/\delta$. Then one has that
  \[
  \Scal^{\omega}_{D}[\bvarphi](\delta \widetilde{\bx}+\bz) = \int_{\partial D} \mathbf{\Gamma}^{\omega}(\delta \widetilde{\bx}+\bz-\by)\bvarphi(\by)ds(\by).
  \]
By using $\by=\delta \widetilde{\by}+\bz$ and change of variables in the above integral, there holds
  \[
  \Scal^{\omega}_{D}[\bvarphi](\delta \widetilde{\bx}+\bz) = \int_{\partial B} \mathbf{\Gamma}^{\omega}(\delta \widetilde{\bx}-\delta\widetilde{\by})\widetilde{\bvarphi}(\widetilde{\by})\delta^2 ds(\tilde{\by}).
  \]
  Substituting \eqref{eq:expression_gamma}, the expression for $\mathbf{\Gamma}^{\omega}$, into the previous equation yields that
  \[
  \Scal^{\omega}_{D}[\bvarphi](\delta \widetilde{\bx}+\bz) = \int_{\partial B}\delta \mathbf{\Gamma}^{\delta\omega}( \widetilde{\bx}-\widetilde{\by})\widetilde{\bvarphi}(\widetilde{\by})ds(\widetilde{\by}).
  \]
  Therefore, the following identity is achieved,
\begin{equation}\label{eq:singdb}
  \Scal^{\omega}_D[\bvarphi](\bx)= \delta \Scal^{\delta \omega}_B[\widetilde{\bvarphi}](\widetilde{\bx}).
\end{equation}
 When $\delta$ is sufficient small, for any $\widetilde{\bx}\in\partial B$, from the expansion \eqref{eq:fund_solution_expansion} for the fundamental solution it follows that
 \[
\Scal^{\omega}_D[\bvarphi](\bx)= \delta \Scal_B[\widetilde{\bvarphi}](\widetilde{\bx}) + \delta^2\omega\Rcal_B[\widetilde{\bvarphi}](\widetilde{\bx}) + \delta^3\omega^2\mathcal{I}_B [\widetilde{\bvarphi}](\widetilde{\bx})+ \mathcal{O}(\delta^4).
 \]

 The proof is complete.
\end{proof}

\begin{prop}\label{prop:expansion_NP_operator}
  Let $\bvarphi\in H^{-1/2}(\partial D)^3$. For $\delta\in\mathbb{R}_+$ and $\delta\ll 1$, there holds
\begin{equation}
  (\Kcal^{\omega}_D)^*[\bvarphi](\bx)= \Kcal^*_B[\widetilde{\bvarphi}](\widetilde{\bx}) + \delta^2\omega^2\mathcal{P}_{B}[\widetilde{\bvarphi}](\widetilde{\bx}) +\mathcal{O}(\delta^3),
\end{equation}
where
\begin{equation}\label{eq:definition_operator_P}
  \mathcal{P}_{B}[\widetilde{\bvarphi}](\widetilde{\bx}) =\int_{\partial B} \frac{\partial}{\partial\bnu_{\widetilde{\bx}}} \mathbf{\Lambda}(\widetilde{\bx}- \widetilde{\by})\widetilde{\bvarphi}(\widetilde{\by}) ds(\widetilde{\by}),
\end{equation}
with $\mathbf{\Lambda}$ defined in \eqref{eq:definition_Lambda}.
\end{prop}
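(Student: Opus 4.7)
The plan is to parallel the proof of Proposition~\ref{prop:expansion_single_layer}: first rescale the integral operator from $\partial D$ to $\partial B$, and then substitute the expansion \eqref{eq:fund_solution_expansion} term-by-term into the rescaled kernel. The key bookkeeping concerns the scaling of the conormal derivative. With $\bx=\delta\widetilde{\bx}+\bz$ and $\by=\delta\widetilde{\by}+\bz$, we have $\nabla_{\bx}=\delta^{-1}\nabla_{\widetilde{\bx}}$, while the surface measure contributes a factor $\delta^{2}$ and the fundamental solution itself satisfies $\mathbf{\Gamma}^{\omega}(\delta\bz)=\delta^{-1}\mathbf{\Gamma}^{\delta\omega}(\bz)$ (by direct inspection of \eqref{eq:expression_gamma}). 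Combining these three factors, the net scaling is trivial, and I would first establish the identity
\begin{equation*}
(\Kcal^{\omega}_D)^{*}[\bvarphi](\bx)=(\Kcal^{\delta\omega}_B)^{*}[\widetilde{\bvarphi}](\widetilde{\bx}),
\end{equation*}
the direct analogue of \eqref{eq:singdb} but without a prefactor of $\delta$.

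Next, I would apply Lemma~\ref{} (the expansion \eqref{eq:fund_solution_expansion}) with the formal frequency parameter replaced by $\delta\omega$, so that the kernel of $(\Kcal^{\delta\omega}_B)^{*}$ expands as a power series in $\delta\omega$. The $n=0$ term reproduces $\mathbf{\Gamma}^{0}$ and hence contributes $\Kcal^{*}_B$ after differentiation and integration. The decisive observation is that the $n=1$ term in \eqref{eq:fund_solution_expansion} is proportional to $|\bx|^{0}\mathbf{I}$ in the first sum and carries the prefactor $(n-1)=0$ in the second sum; in other words, it is a constant matrix. Its spatial derivative vanishes, so the conormal derivative $\partial/\partial\bnu_{\widetilde{\bx}}$ kills the entire $\mathcal{O}(\delta\omega)$ contribution. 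This explains the gap between $n=0$ and $n=2$ in the claimed asymptotics.

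The $n=2$ term in \eqref{eq:fund_solution_expansion} gives exactly $(\delta\omega)^{2}\mathbf{\Lambda}(\widetilde{\bx}-\widetilde{\by})$ with $\mathbf{\Lambda}$ as in \eqref{eq:definition_Lambda}; a quick check of the coefficients at $n=2$ ($i^{2}/(4\cdot 2!)=-1/8$ in the first sum and the same $-1/8$ times $(n-1)=1$ in the second) reproduces the prefactors $\frac{1}{32\pi}(3c_T^{-4}+c_L^{-4})$ and $-\frac{1}{32\pi}(c_T^{-4}-c_L^{-4})$ appearing in \eqref{eq:definition_Lambda}. Taking the conormal derivative inside the integral then produces $\mathcal{P}_{B}[\widetilde{\bvarphi}](\widetilde{\bx})$ as defined in \eqref{eq:definition_operator_P}, weighted by $\delta^{2}\omega^{2}$.

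Finally, the tail of the series contributes $\mathcal{O}(\delta^{3})$, but to make this rigorous I would need to justify that term-by-term differentiation of \eqref{eq:fund_solution_expansion} under the principal-value integral against an $H^{-1/2}$ density is permissible. This is the main technical obstacle: while the pointwise expansion of $\mathbf{\Gamma}^{\delta\omega}$ converges uniformly on compact subsets of $\RR^{3}\setminus\{0\}$, the conormal derivative of the leading singular term must be handled as a Cauchy principal value, and one must check that the remainders after differentiation remain bounded operators on $H^{-1/2}(\partial B)^{3}$ uniformly in $\delta\ll 1$. Once this boundedness is established (via the smoothness of $\partial B$ and the fact that the $n\geq 3$ terms produce kernels that are at worst weakly singular after one differentiation), the stated expansion follows.
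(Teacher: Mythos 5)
Your proposal is correct and takes essentially the same route as the paper: first establish the exact rescaling identity $(\Kcal^{\omega}_D)^*[\bvarphi](\bx)=(\Kcal^{\delta\omega}_B)^*[\widetilde{\bvarphi}](\widetilde{\bx})$ by tracking the three scaling factors ($\delta^{-1}$ from $\partial/\partial\bnu_{\bx}=\delta^{-1}\partial/\partial\bnu_{\widetilde{\bx}}$, $\delta^{-1}$ from $\mathbf{\Gamma}^{\omega}(\delta\cdot)=\delta^{-1}\mathbf{\Gamma}^{\delta\omega}(\cdot)$, and $\delta^{2}$ from the surface measure), and then substitute the kernel expansion \eqref{eq:fund_solution_expansion}. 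You supply more detail than the paper does (notably the observation that the $n=1$ term is a constant matrix and is therefore annihilated by the conormal derivative, explaining the jump from $\mathcal{O}(1)$ to $\mathcal{O}(\delta^{2})$, and the coefficient check at $n=2$ reproducing $\mathbf{\Lambda}$), but these are elaborations of the same argument rather than a different proof.
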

\begin{proof}
Denoting $\widetilde{\bx}=(\bx-\bz)/\delta$ for any $\bx\in\partial D$, one can find that
  \[
  (\Kcal^{\omega}_D)^*[\bvarphi](\delta \widetilde{\bx}+\bz)=\mathrm{p.v.}\frac{1}{\delta}\int_{\partial D} \frac{\partial}{\partial\bnu_{\tilde{\bx}}} \mathbf{\Gamma}^{\omega}(\delta \widetilde{\bx}+\bz-\by)\bvarphi(\by)ds(\by),
  \]
 since for any $\bx\in\partial D$ there holds $\bnu_{D}(\bx)=\bnu_{B}(\widetilde{\bx})$. Then one has by setting $\by=\delta \widetilde{\by}+\bz$ in the previous integral that
 \[
  (\Kcal^{\omega}_D)^*[\bvarphi](\delta \widetilde{\bx}+\bz)=\mathrm{p.v.}\frac{1}{\delta}\int_{\partial B} \frac{\partial}{\partial\bnu_{\widetilde{\bx}}} \mathbf{\Gamma}^{\omega}(\delta \widetilde{\bx}-\delta \widetilde{\by})\widetilde{\bvarphi}(\widetilde{\by})\delta^2 ds(\widetilde{\by}).
 \]
  By substituting \eqref{eq:expression_gamma} into the last equation and direct calculation, one can obtain that
  \[
  (\Kcal^{\omega}_D)^*[\bvarphi](\delta \widetilde{\bx}+\bz)=\mathrm{p.v.}\int_{\partial B} \frac{\partial}{\partial\bnu_{\widetilde{\bx}}} \mathbf{\Gamma}^{\delta\omega}(\widetilde{\bx}- \widetilde{\by})\widetilde{\bvarphi}(\widetilde{\by}) ds(\widetilde{\by}),
  \]
  which gives the the following identity
\begin{equation}\label{eq:npdb}
  (\Kcal^{\omega}_D)^*[\bvarphi](\bx)= (\Kcal^{\delta \omega}_B)^*[\widetilde{\bvarphi}](\widetilde{\bx}).
\end{equation}
  Thus when $\delta$ is sufficient small, from the expansion \eqref{eq:fund_solution_expansion} for the fundamental solution it follows that
  \[
   (\Kcal^{\omega}_D)^*[\bvarphi](\bx)= \Kcal^*_B[\widetilde{\bvarphi}](\widetilde{\bx}) + \delta^2\omega^2\mathcal{P}_{B}[\widetilde{\bvarphi}](\widetilde{\bx}) +\mathcal{O}(\delta^3),
  \]
  and this completes the proof.
\end{proof}
We have derived the asymptotic expansions for the single layer potential operator and the Neumann-Poincar\'e type operator $(\Kcal_D^\omega)^*$. In what follows, we also prove some important Lemmas for our subsequent usage.
\begin{lem}\label{le:imp01}
Let $\mathcal{I}_B$ be defined in \eqnref{eq:definition_operator_I} and if $\delta\in\mathbb{R}_+$ with $\delta\ll 1$, then for $\tilde\bvarphi\in H^{-1/2}(\partial D)^3$, there holds
\beq\label{eq:leimp01}
\mathcal{L}_{\lambda,\mu}\mathcal{I}_B[\tilde\bvarphi](\tdx)=-\Scal_B[\tilde\bvarphi](\tdx), \quad \tdx\in B.
\eeq
\end{lem}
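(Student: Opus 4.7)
The plan is to leverage the fact that $\mathbf{\Gamma}^{\omega}$ is a fundamental solution of $\mathcal{L}_{\lambda,\mu}+\omega^2$, so that $(\mathcal{L}_{\lambda,\mu}+\omega^2)\mathbf{\Gamma}^{\omega}=\delta_0\mathbf{I}$ in the distributional sense and hence pointwise
\[
\mathcal{L}_{\lambda,\mu}\mathbf{\Gamma}^{\delta}(\bx)=-\delta^2\,\mathbf{\Gamma}^{\delta}(\bx),\qquad \bx\neq 0,
\]
where I read $\delta$ as a free frequency-like parameter exactly as in Lemma~2.1. Since $\mathbf{\Gamma}^{\delta}(\bx)$ is real-analytic in $\delta$ for each fixed $\bx\neq 0$, this identity translates into identities between Taylor coefficients in $\delta$, and the conclusion will drop out by matching the $\delta^2$ coefficient.

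First, I would read off the $n=0,1,2$ terms in \eqref{eq:fund_solution_expansion}. Using $c_T^2=\mu$, $c_L^2=\lambda+2\mu$ together with the definitions of $\gamma_1$, $\gamma_2$ and $\gamma_3$, a direct inspection gives
\[
\mathbf{\Gamma}^{\delta}(\bx)=\mathbf{\Gamma}^{0}(\bx)+\delta\,\gamma_3\mathbf{I}+\delta^{2}\mathbf{\Lambda}(\bx)+\mathcal{O}(\delta^{3}),
\]
so $\mathbf{\Lambda}$ is precisely the $\delta^2$ coefficient. Plugging this into $\mathcal{L}_{\lambda,\mu}\mathbf{\Gamma}^{\delta}=-\delta^{2}\mathbf{\Gamma}^{\delta}$ for $\bx\neq 0$ and matching powers of $\delta$ then yields: at order $\delta^0$, $\mathcal{L}_{\lambda,\mu}\mathbf{\Gamma}^{0}=0$ (the Kelvin matrix is a classical solution off the origin); at order $\delta^1$, $\mathcal{L}_{\lambda,\mu}(\gamma_3\mathbf{I})=0$ trivially since $\gamma_3\mathbf{I}$ is a constant matrix; and at order $\delta^2$, the key identity
\[
\mathcal{L}_{\lambda,\mu}\mathbf{\Lambda}(\bx)=-\mathbf{\Gamma}^{0}(\bx),\qquad \bx\neq 0.
\]

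Finally, for $\tdx$ in the open interior of $B$, the kernel $\mathbf{\Lambda}(\tdx-\tdy)$ is smooth in $\tdx$ uniformly for $\tdy\in\partial B$ (they are separated by a positive distance), which justifies differentiation under the integral sign. Applying the pointwise identity just derived then gives
\[
\mathcal{L}_{\lambda,\mu}\mathcal{I}_{B}[\widetilde{\bvarphi}](\tdx)=\int_{\partial B}\mathcal{L}_{\lambda,\mu}\mathbf{\Lambda}(\tdx-\tdy)\,\widetilde{\bvarphi}(\tdy)\,ds(\tdy)=-\Scal_{B}[\widetilde{\bvarphi}](\tdx),
\]
which is the claim. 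The main obstacle is the coefficient check in the expansion, which is direct but requires care with the algebra in $c_T$, $c_L$ and the constants $\gamma_j$; an alternative route would be a brute-force calculation of $\mathcal{L}_{\lambda,\mu}\mathbf{\Lambda}$ using standard identities for $\Delta|\bx|$ and $\Delta(\bx\bx^T/|\bx|)$, but the PDE-matching approach above is cleaner and sidesteps that computation.
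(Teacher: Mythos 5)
Your proof is correct and follows essentially the same approach as the paper: both start from the fact that $\mathbf{\Gamma}^{\delta}$ is a fundamental solution of $\mathcal{L}_{\lambda,\mu}+\delta^2$, substitute the Taylor expansion \eqref{eq:fund_solution_expansion}, and match the $\delta^2$ coefficient to obtain $\mathcal{L}_{\lambda,\mu}\mathbf{\Lambda}=-\mathbf{\Gamma}^0$ away from the origin, then apply the definition of $\mathcal{I}_B$. Your remarks on the lower-order coefficients and on differentiating under the integral sign are useful clarifications but do not change the substance of the argument.
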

\begin{proof}
Recall that $\mathbf{\Gamma}^{\delta}$ is the fundamental solution to $\mathcal{L}_{\lambda,\mu}+\delta^2$, which shows that
\beq\label{eq:leimp02}
(\mathcal{L}_{\lambda,\mu}+\delta^2)\mathbf{\Gamma}^{\delta}(\tdx-\tdy)=0, \quad \tdx\in B,\ \ \tdy\in \p B.
\eeq
By substituting \eqnref{eq:fund_solution_expansion} into \eqnref{eq:leimp02} and comparing the coefficients of $\delta^2$ one then has
\beq\label{eq:leimp02?}
\mathcal{L}_{\lambda,\mu}\mathbf{\Lambda}(\tdx-\tdy)=-\mathbf{\Gamma}^0(\tdx-\tdy), \quad \tdx\in B,\ \ \tdy\in \p B,
\eeq
where $\mathbf{\Lambda}$ is given in \eqref{eq:definition_Lambda}. By using the definition of $\mathcal{I}_B$ in \eqnref{eq:definition_operator_I}, one finally obtains \eqnref{eq:leimp01}, which completes the proof.
\end{proof}
\begin{lem}\label{prop:property_K_star}
  Suppose that $\Scal_B[\tilde\bvarphi]=C$ holds on $\partial B$, where $C$ is a constant vector and $\tilde\bvarphi\in H^{-1/2}(\partial D)^3$, then
  \begin{equation}\label{eq:prop01}
    \left(-\frac{I}{2} + \Kcal^*_B\right)\Scal^{-1}_B[C]=0.
  \end{equation}
\end{lem}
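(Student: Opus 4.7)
The plan is to translate the statement into a uniqueness argument for the interior Dirichlet problem of the elastostatic Lamé system on $B$. Since $\Scal_B[\tilde\bvarphi] = C$ on $\partial B$ by hypothesis, the identity $\Scal_B^{-1}[C] = \tilde\bvarphi$ holds on $\partial B$, so the claim \eqref{eq:prop01} is equivalent to showing
\begin{equation*}
\left(-\frac{I}{2} + \Kcal_B^*\right)[\tilde\bvarphi] = 0 \quad \text{on } \partial B.
\end{equation*}
The jump relation \eqref{eq:jump_single} at $\omega = 0$ identifies the left-hand side with the interior conormal trace $\partial_{\bnu} \Scal_B[\tilde\bvarphi]|_{-}$, so it suffices to prove that this interior conormal derivative vanishes.

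First I would observe that the kernel $\mathbf{\Gamma}^0$ is the fundamental solution of the elastostatic operator $\mathcal{L}_{\lambda,\mu}$, hence $\bu := \Scal_B[\tilde\bvarphi]$ satisfies $\mathcal{L}_{\lambda,\mu}\bu = 0$ in $B$. Next I would use the standard fact that the single layer potential is continuous across $\partial B$, so the interior Dirichlet trace of $\bu$ equals the prescribed boundary value: $\bu|_{-} = C$ on $\partial B$. Thus $\bu$ solves the interior Dirichlet problem
\begin{equation*}
\mathcal{L}_{\lambda,\mu}\bu = 0 \text{ in } B, \qquad \bu = C \text{ on } \partial B.
\end{equation*}

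Since the constant field $\bx \mapsto C$ trivially satisfies the same problem, and the strong convexity condition \eqref{eq:convexity condition} guarantees uniqueness of the interior Dirichlet problem (via the standard Korn-inequality/energy argument for the Lamé system), I conclude $\bu \equiv C$ throughout $B$. The conormal derivative of a constant vector field vanishes identically because $\partial_{\bnu}C = \lambda(\nabla \cdot C)\bnu + \mu(\nabla C + \nabla C^T)\bnu = 0$. Therefore $\partial_{\bnu}\Scal_B[\tilde\bvarphi]|_{-} = 0$ on $\partial B$, which yields \eqref{eq:prop01}.

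The only subtle point is invoking uniqueness for the interior Dirichlet problem of the elastostatic system, but this is a classical consequence of \eqref{eq:convexity condition}; everything else is a direct application of the jump relation \eqref{eq:jump_single} and the continuity of the single layer potential, both already recorded in the preceding subsection.
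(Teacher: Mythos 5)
Your proof is correct and follows essentially the same route as the paper's: identify $\Scal_B[\tilde\bvarphi]$ as the unique solution of the interior Dirichlet problem for $\mathcal{L}_{\lambda,\mu}$ with constant boundary data $C$, conclude it equals $C$ throughout $B$, and read off the vanishing interior conormal trace via the jump relation \eqref{eq:jump_single}. You spell out one detail the paper leaves implicit (that the conormal derivative of a constant vector field is zero), but the argument is the same.
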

\begin{proof}
  When $\Scal_B[\tilde\bvarphi]=C$ on $\partial B$, one can easily obtain that $\Scal_B[\tilde\bvarphi]=C$ in the domain $B$. Indeed, let us introduce the Dirichlet boundary value problem
  \[
    \left\{
    \begin{array}{ll}
      \mathcal{L}_{\lambda,\mu} \mathbf{u}(\tilde\bx)=0, & \tilde\bx\in B, \medskip\\
      \bu(\tilde\bx)=C, & \tilde\bx \in \partial B,
    \end{array}
  \right.
  \]
  which has a unique solution (cf. \cite{ABGK}), and $\bu(\tilde\bx)=C$ is the solution. Then by the jump condition \eqref{eq:jump_single} one can obtain that
  \[
  \left(-\frac{I}{2} + \Kcal^*_B\right)[\bvarphi](\tilde\bx)=0, \quad \tilde\bx\in\partial B.
  \]
 Thus the proof is completed since the operator $\Scal_B$ is invertible (cf. \cite{AKKY}).
\end{proof}

We shall also need the following important lemma about the spectrum of the operator $\Kcal_{B}$, which can be found in \cite{AKKY}.
\begin{lem}\label{prop:spectrum_K}
 Assume that $\mathbf{a}\in\mathbb{R}^3$, then $\mathbf{a}$ is an eigenfunction of the operator $\Kcal_{B}$ and the corresponding eigenvalue is $\frac{1}{2}$, i.e.
 \[
   \Kcal_{B}[\mathbf{a}]=\frac{1}{2}\mathbf{a}.
 \]
\end{lem}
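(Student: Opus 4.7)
\medskip
\noindent\textbf{Proof plan.} The strategy is to apply the Green (integral) representation formula for the elastostatic Lam\'e system to the trivial solution $\bu(\tilde\bx)\equiv\mathbf{a}$ in $B$, and then pass to the boundary using the jump relations for the elastic double layer potential.

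First, I would observe that the constant vector field $\bu(\tilde\bx)=\mathbf{a}$ clearly satisfies $\mathcal{L}_{\lambda,\mu}\bu=0$ in $B$, and moreover its conormal derivative vanishes:
\[
\frac{\partial\bu}{\partial\bnu}=\lambda(\nabla\cdot\bu)\bnu+\mu(\nabla\bu+\nabla\bu^T)\bnu=\mathbf{0}\quad\text{on }\partial B,
\]
since $\nabla\bu\equiv 0$. Next, I would invoke the interior Green representation formula for the Lam\'e system (which can be found in \cite{ABGK,AKKY}),
\[
\bu(\tilde\bx)=\Dcal_B[\bu|_{\partial B}](\tilde\bx)-\Scal_B\Bigl[\tfrac{\partial\bu}{\partial\bnu}\Big|_{\partial B}\Bigr](\tilde\bx),\qquad \tilde\bx\in B.
\]
Substituting $\bu\equiv\mathbf{a}$ and using that the conormal derivative vanishes, only the double layer term survives, so
\[
\mathbf{a}=\Dcal_B[\mathbf{a}](\tilde\bx),\qquad \tilde\bx\in B.
\]

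Finally, I would pass to the boundary by letting $\tilde\bx\to\partial B$ from the inside and invoking the jump relation
\[
\Dcal_B[\bvarphi]\big|_{-}=\Bigl(\tfrac{1}{2}I+\Kcal_B\Bigr)[\bvarphi]\qquad\text{on }\partial B,
\]
which is the $\omega=0$ case of the jump identity stated earlier in the excerpt. This yields $\mathbf{a}=\bigl(\tfrac{1}{2}I+\Kcal_B\bigr)[\mathbf{a}]$ on $\partial B$, and rearranging gives $\Kcal_B[\mathbf{a}]=\tfrac{1}{2}\mathbf{a}$, as claimed.

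I do not expect a genuine obstacle here: the argument is a clean application of the elastostatic Green formula. The only point requiring a little care is making sure the sign convention for the interior representation matches the jump relations as normalized in this paper; once that is pinned down, the computation is immediate. This same approach (interior representation applied to an explicit solution, plus a jump relation on $\partial B$) is in fact the standard device for extracting eigenfunctions of $\Kcal_B$ with eigenvalue $1/2$.
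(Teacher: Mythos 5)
Your proof is correct. The paper does not prove Lemma~\ref{prop:spectrum_K} itself; it simply cites \cite{AKKY}, and your argument --- applying the interior Green representation formula $\bu=\Dcal_B[\bu|_{\partial B}]-\Scal_B\bigl[\partial\bu/\partial\bnu\bigr]$ to the constant field $\bu\equiv\mathbf{a}$ (whose conormal derivative vanishes) and then invoking the interior jump relation --- is exactly the standard derivation found in that reference. The sign you flagged as needing care does line up with this paper's normalization: the jump relation stated in Section~2.1 is $\Dcal^{\omega}_D[\bvarphi]|_{\pm}=\left(\mp\frac{1}{2}I+\Kcal^\omega_D\right)[\bvarphi]$, so the interior trace (the $-$ sign) gives $\Dcal_B[\mathbf{a}]|_{-}=\left(\frac{1}{2}I+\Kcal_B\right)[\mathbf{a}]$, and equating this to $\mathbf{a}$ yields $\Kcal_B[\mathbf{a}]=\frac{1}{2}\mathbf{a}$ as claimed.
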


\subsection{Far-Field expansion}
In this section, we derive the far-field expansion for the elastic wave field $\bu(\bx)$ to the equation \eqref{eq:general}.

First of all, by Taylor series expansion, $\bF(\by)$ given in \eqref{eq:souce} has the following expansion since $D$ is a nanoparticle
\begin{equation}\label{eq:direxpF01}
  \bF(\by)=\bF(\delta \widetilde{\by} + \bz) = \sum^{+\infty}_{|\bbeta|=0} \frac{1}{\bbeta!} \delta^{|\bbeta|} \widetilde{\by}^{\bbeta} \partial^{\bbeta}\bF(\bz).
\end{equation}
Let $(\widetilde{\bvarphi}_{\bbeta}, \widetilde{\bpsi}_{\bbeta})$ be the solution to the following equation
\begin{equation}\label{eq:general_phi_psi}
  \left[
    \begin{array}{cc}
       \Scal^{\delta \omega_1}_B & - \Scal^{\delta\omega_2}_B\medskip \\
      c\left(-\frac{I}{2} + (\Kcal^{\delta\omega_1}_B)^*\right) & -\frac{I}{2} - (\Kcal^{\delta\omega_2}_B)^* \\
    \end{array}
  \right]
 \left[
   \begin{array}{c}
     \widetilde{\bvarphi}_{\bbeta}\medskip \\
     \widetilde{\bpsi}_{\bbeta} \\
   \end{array}
 \right]=
 \left[
   \begin{array}{c}
     \widetilde{\by}^{\bbeta} \partial^{\bbeta}\bF(\bz)\medskip \\
     \frac{\partial}{\partial \widetilde{\bnu}}\widetilde{\by}^{\bbeta} \partial^{\bbeta}\bF(\bz) \\
   \end{array}
 \right].
\end{equation}
From the identities \eqref{eq:singdb} and \eqref{eq:npdb} for the single layer potential operator and the NP operator, the linearity of the equation \eqref{eq:transmission condition} and together with the help of the following relationship
$$
\frac{\partial}{\partial \bnu}\bF(\by)=\frac{\partial}{\partial \widetilde{\bnu}}\sum^{+\infty}_{|\bbeta|=0} \frac{1}{\bbeta!} \delta^{|\bbeta|-1} \widetilde{\by}^{\bbeta} \partial^{\bbeta}\bF(\bz),
$$
one can conclude that $(\widetilde{\bvarphi}, \widetilde{\bpsi})$ with the following expression
\beq\label{eq:phipsi01}
\widetilde{\bvarphi}=\sum_{|\bbeta|=0}^{+\infty}\delta^{|\bbeta|-1}\widetilde{\bvarphi}_{\bbeta}, \quad
\widetilde{\bpsi}=\sum_{|\bbeta|=0}^{+\infty}\delta^{|\bbeta|-1}\widetilde{\bpsi}_{\bbeta},
\eeq
is the solution to \eqnref{eq:transmission condition}. Therefore from \eqref{eq:general_solution}, we have the following expansion for $\bu(\bx)$ in $\RR^3\setminus\overline{D}$
\begin{equation}\label{eq:farexp01}
  \bu(\bx)=\bF(\bx) + \sum^{+\infty}_{|\balpha|=0} \sum^{+\infty}_{|\bbeta|=0} \delta^{1+|\balpha|+|\bbeta|} \frac{(-1)^{|\balpha|}}{\balpha!\bbeta!}\partial^{\balpha} \bGamma^{\omega}(\bx-\bz)\int_{\partial B} \widetilde{\by}^{\balpha} \widetilde{\bpsi}_{\bbeta}(\widetilde{\by})ds(\widetilde{\by}).
\end{equation}
For $\balpha, \bbeta\in\mathbb{N}^3$, define
\begin{equation}\label{eq:def_M}
  M_{\balpha,\bbeta} := \int_{\partial B} \widetilde{\by}^{\balpha} \widetilde{\bpsi}_{\bbeta}(\widetilde{\by})ds(\widetilde{\by}).
\end{equation}
Then the following lemma holds.
\begin{lem}\label{lem:solution_outside}
Let $\bu$ be the solution to the system \eqnref{eq:def01}-\eqref{eq:radiating}. Then for $\bx\in\mathbb{R}^3\backslash\overline{D}$, one has
\begin{equation}\label{eq:def_M1}
  \bu(\bx)=\bF(\bx) + \sum^{+\infty}_{|\balpha|=0} \sum^{+\infty}_{|\bbeta|=0} \delta^{1+|\balpha|+|\bbeta|} \frac{(-1)^{|\balpha|}}{\balpha!\bbeta!}\partial^{\balpha} \bGamma^{\omega_2}(\bx-\bz) M_{\balpha,\bbeta} .
\end{equation}
\end{lem}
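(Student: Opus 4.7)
The plan is to observe that the claimed formula \eqref{eq:def_M1} is essentially a notational restatement of the expansion \eqref{eq:farexp01} that was already derived in the text, once we introduce the shorthand $M_{\boldsymbol{\alpha},\boldsymbol{\beta}}$ from \eqref{eq:def_M}. So the work lies in justifying \eqref{eq:farexp01} itself in a clean sequence of substitutions.

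First, I would start from the representation \eqref{eq:general_solution}, which gives
\[
\bu(\bx) = \Scal^{\omega_2}_D[\bpsi](\bx) + \bF(\bx), \qquad \bx \in \mathbb{R}^3 \setminus \overline{D}.
\]
Since $\mathbf{f}$ (and hence $\bF$) is smooth in a neighborhood of $\bz$, I would Taylor expand $\bF(\by)$ at $\by = \bz$ for $\by \in \partial D$ as in \eqref{eq:direxpF01}, and similarly expand its conormal derivative on $\partial D$. Using the linearity of the integral system \eqref{eq:transmission condition} together with the scaling identities \eqref{eq:singdb} and \eqref{eq:npdb}, the system reduces, order by order in $\delta$, to the rescaled system \eqref{eq:general_phi_psi}. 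This produces the series decomposition \eqref{eq:phipsi01} for $(\widetilde{\bvarphi}, \widetilde{\bpsi})$.

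Second, I would substitute this decomposition into $\Scal^{\omega_2}_D[\bpsi](\bx)$ and apply the change of variables $\by = \delta\widetilde{\by} + \bz$ (which produces a factor $\delta^2$ from the surface measure). Then I would Taylor expand $\bGamma^{\omega_2}(\bx - \by) = \bGamma^{\omega_2}(\bx - \bz - \delta \widetilde{\by})$ in powers of $\delta$ about $\bx - \bz$, producing factors $\delta^{|\balpha|}$ and derivatives $(-1)^{|\balpha|}\partial^{\balpha}\bGamma^{\omega_2}(\bx - \bz)/\balpha!$. Combining the powers of $\delta$ from the three sources — the scaling $\delta^2$ of $ds$, the $\delta^{|\balpha|}$ from the kernel expansion, and the $\delta^{|\bbeta|-1}$ from \eqref{eq:phipsi01} — yields the overall factor $\delta^{1+|\balpha|+|\bbeta|}$ in \eqref{eq:farexp01}.

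Finally, recognizing that the surface integral appearing in \eqref{eq:farexp01} is precisely the quantity $M_{\balpha,\bbeta}$ introduced in \eqref{eq:def_M} gives \eqref{eq:def_M1}. The only step that is not purely formal is the convergence of the double series, which follows from the analyticity of $\bF$ near $\bz$ and the smoothness of $\partial B$, so the expansion is understood in the asymptotic sense as $\delta \to 0$; no serious obstacle is expected beyond bookkeeping of the $\delta$-powers and keeping the $\bx$ versus $\widetilde{\bx}$ variables straight.
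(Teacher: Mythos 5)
Your proposal is correct and follows exactly the path the paper takes: the lemma is a notational restatement of the expansion \eqref{eq:farexp01}, which the text derives immediately before stating the lemma by (i) Taylor expanding $\bF$ at $\bz$, (ii) using the scaling identities \eqref{eq:singdb}, \eqref{eq:npdb} and linearity of \eqref{eq:transmission condition} to obtain \eqref{eq:phipsi01}, and (iii) substituting into $\Scal^{\omega_2}_D[\bpsi]$, rescaling the surface measure by $\delta^2$, and Taylor expanding the kernel $\bGamma^{\omega_2}(\bx-\bz-\delta\widetilde{\by})$. Your bookkeeping $\delta^2\cdot\delta^{|\balpha|}\cdot\delta^{|\bbeta|-1}=\delta^{1+|\balpha|+|\bbeta|}$ and the identification of the surface integrals with $M_{\balpha,\bbeta}$ match the paper.
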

In order to derive the far-field expansion for the displacement field $\bu(\bx)$, we next analyze the term $M_{\balpha,\bbeta}$ more precisely. From the expression for $M_{\balpha,\bbeta}$ in \eqref{eq:def_M}, $\widetilde{\bpsi}_{\bbeta}$ needs to be treated more carefully.

From \eqnref{eq:general_phi_psi} one can readily find out that $\widetilde{\bvarphi}_{\bbeta}$ and $\widetilde{\bpsi}_{\bbeta}$ still depend on $\delta$.  Thus $\widetilde{\bvarphi}_{\bbeta}$ and $\widetilde{\bpsi}_{\bbeta}$ could be further expanded by
\beq\label{eq:phipsiexp02}
\widetilde{\bvarphi}_{\bbeta}=\sum_{n=0}^{+\infty}\delta^n \widetilde{\bvarphi}_{\bbeta,n},\quad
\widetilde{\bpsi}_{\bbeta}=\sum_{n=0}^{+\infty}\delta^n \widetilde{\bpsi}_{\bbeta,n}.
\eeq
Then by using \eqnref{eq:general_phi_psi}, Proposition \ref{prop:expansion_single_layer} and Proposition \ref{prop:expansion_NP_operator}, one can obtain that
\beq\label{eq:bdinteq01}
\Big(\Acal+\delta\mathcal{T}+\delta^2\mathcal{N}\Big)\left[
        \begin{array}{c}
          \widetilde{\bvarphi}_{\beta,0}+\delta\widetilde{\bvarphi}_{\beta,1}+\delta^2\widetilde{\bvarphi}_{\beta,2} \medskip\\
          \widetilde{\bpsi}_{\beta,0}+\delta\widetilde{\bpsi}_{\beta,1}+\delta^2\widetilde{\bpsi}_{\beta,2} \\
        \end{array}
      \right]= \left[
   \begin{array}{c}
     \widetilde{\by}^{\bbeta} \partial^{\bbeta}\bF(\bz)\medskip \\
     \frac{\partial}{\partial \bnu}\widetilde{\by}^{\bbeta} \partial^{\bbeta}\bF(\bz) \\
   \end{array}
 \right]+\mathcal{O}(\delta^3),
\eeq
where $\Acal$, $\mathcal{T}$ and $\mathcal{N}$ are defined by
\begin{equation}\label{eq:definition_operator_A}
\Acal=
 \left[
    \begin{array}{cc}
       \Scal_B & -\Scal_B\medskip \\
      c\left(-\frac{I}{2} + \Kcal_B^*\right) & -\frac{I}{2} - \Kcal_B^* \\
    \end{array}
  \right], \quad \mathcal{T}=\left[
               \begin{array}{cc}
                 \omega_1 \Rcal_{B} & -\omega_2 \Rcal_{B}\medskip \\
                 0 & 0 \\
               \end{array}
             \right],
\end{equation}
and
\beq\label{eq:definition_operator_N}
 \mathcal{N}=\left[
               \begin{array}{cc}
                 \omega_1^2 \mathcal{I}_{B} & -\omega_2^2 \mathcal{I}_{B}\medskip \\
                 c\omega_1^2 \mathcal{P}_{B} & -\omega_2^2 \mathcal{P}_{B} \\
               \end{array}
             \right].
\eeq
The operators $\mathcal{I}_B$ and $\mathcal{P}_B$ are defined in \eqnref{eq:definition_operator_I} and \eqnref{eq:definition_operator_P}, respectively. Since the highest order on $\widetilde{\bvarphi}_{\beta}$ and $\widetilde{\bpsi}_{\beta}$ that we need is of $\delta^2$ order, we neglect the details of the higher-order terms in \eqnref{eq:bdinteq01} and put them all in $\mathcal{O}(\delta^3)$. We mention that the invertibility of the operator $\Acal^{-1}$ can be found in \cite{ABGK}.

\subsection{Asymptotics for the Potential}

From Lemma \ref{lem:solution_outside}, the elastic wave field has the following asymptotic property when $\delta\ll1$,
\begin{equation}\label{eq:mainexp01}
\begin{split}
  \bu(\bx)=&\bF(\bx) +  \delta \left(\bGamma^{\omega_2}(\bx-\bz) M_{0,0}\right)\medskip  \\
                &+\delta^2 \left(\sum_{|\bbeta|=1} \bGamma^{\omega_2}(\bx-\bz) M_{0,\bbeta}-\sum_{|\balpha|=1}  \partial^{\balpha} \bGamma^{\omega_2}(\bx-\bz) M_{\balpha,0}  \right) \\
                &+\delta^3 \left(\sum_{|\bbeta|=2}\frac{1}{\bbeta!} \bGamma^{\omega_2}(\bx-\bz) M_{0,\bbeta}-\sum_{|\balpha|=1}\sum_{|\bbeta|=1}   \partial^{\balpha} \bGamma^{\omega_2}(\bx-\bz) M_{\balpha,\bbeta} \right.\medskip \\
                &+\left.\sum_{|\balpha|=2}\frac{1}{\balpha!} \partial^{\balpha}\bGamma^{\omega_2}(\bx-\bz) M_{\balpha,0} \right) + \mathcal{O}(\delta^4).
\end{split}
\end{equation}
We next analyze \eqnref{eq:mainexp01} term by term. For the sake of simplicity, we define the parameter $\kappa_c$ by
\beq\label{eq:defkappa}
\kappa_c:=\frac{c+1}{2(c-1)}.
\eeq
It is noted that key parts lying in \eqnref{eq:mainexp01} is $M_{\balpha,\bbeta}$ with $|\balpha|\leq2$ and $|\bbeta|\leq2$. From the expression for $M_{\balpha,\bbeta}$ in \eqref{eq:def_M}, we should first derive the asymptotic expression for $\widetilde{\bpsi}_{\bbeta}$, $|\bbeta|\leq 2$.
We present the estimates in the following propositions.
\begin{prop}\label{prop:estimate_solution}
Let $\widetilde{\bvarphi}_{\bbeta, 0}$ and $\widetilde{\bpsi}_{\bbeta,0}$ be defined in \eqnref{eq:phipsi01}. Then one has for $\bbeta\in\mathbb{N}^3$ that
\begin{equation}\label{eq:psi_beta_0}
  \widetilde{\bpsi}_{\bbeta,0}=\frac{1}{(c-1)}\left(\Kcal^*_B -\kappa_c I\right)^{-1}\left[\frac{\partial}{\partial \bnu}\widetilde{\by}^{\bbeta} \partial^{\bbeta}\bF(\bz)- c\left(-\frac{I}{2} + \Kcal^*_B\right)\Scal^{-1}_B\left[\widetilde{\by}^{\bbeta} \partial^{\bbeta} \bF(\bz)\right] \right],
\end{equation}
and
\begin{equation}\label{eq:psi_phi_0}
  \widetilde{\bvarphi}_{\bbeta, 0}=\widetilde{\bpsi}_{\bbeta, 0} + \Scal^{-1}_B\left[\widetilde{\by}^{\bbeta} \partial^{\bbeta} \bF(\bz)\right].
\end{equation}
Furthermore, there holds that $\widetilde{\bpsi}_{0,0}=0$.
\end{prop}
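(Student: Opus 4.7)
The plan is to extract the leading-order contribution from the asymptotic identity \eqref{eq:bdinteq01}. Substituting the double expansion \eqref{eq:phipsiexp02} into \eqref{eq:bdinteq01} and collecting the coefficients of $\delta^0$ on both sides, I obtain the pair of surface integral equations
\begin{equation*}
\Acal\left[\begin{array}{c} \widetilde{\bvarphi}_{\bbeta,0} \medskip\\ \widetilde{\bpsi}_{\bbeta,0} \end{array}\right] = \left[\begin{array}{c} \widetilde{\by}^{\bbeta}\partial^{\bbeta}\bF(\bz) \medskip\\ \frac{\partial}{\partial\widetilde\bnu}\widetilde{\by}^{\bbeta}\partial^{\bbeta}\bF(\bz)\end{array}\right],
\end{equation*}
with $\Acal$ given in \eqref{eq:definition_operator_A}. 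The solvability of this algebraic system is underwritten by the invertibility of $\Acal$ recalled after \eqref{eq:definition_operator_N}.

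The next step is to solve this $2\times 2$ block system by elimination. The top row reads $\Scal_B[\widetilde{\bvarphi}_{\bbeta,0}]-\Scal_B[\widetilde{\bpsi}_{\bbeta,0}] = \widetilde{\by}^{\bbeta}\partial^{\bbeta}\bF(\bz)$, which, after inverting $\Scal_B$, yields the claimed expression \eqref{eq:psi_phi_0} for $\widetilde{\bvarphi}_{\bbeta,0}$. Substituting this expression into the second row and grouping the terms involving $\widetilde{\bpsi}_{\bbeta,0}$, I would obtain
\begin{equation*}
\left[(c-1)\Kcal_B^* - \tfrac{c+1}{2}I\right]\widetilde{\bpsi}_{\bbeta,0} = \frac{\partial}{\partial\widetilde\bnu}\widetilde{\by}^{\bbeta}\partial^{\bbeta}\bF(\bz) - c\left(-\tfrac{I}{2}+\Kcal_B^*\right)\Scal_B^{-1}\!\left[\widetilde{\by}^{\bbeta}\partial^{\bbeta}\bF(\bz)\right].
\end{equation*}
Factoring out $(c-1)$ on the left and recognising the quantity $\kappa_c=(c+1)/(2(c-1))$ defined in \eqref{eq:defkappa} produces the operator $(\Kcal_B^*-\kappa_c I)$, whose invertibility (on the appropriate subspace) follows from the invertibility of $\Acal$ at this generic value of $c$. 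Inverting then yields \eqref{eq:psi_beta_0}.

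For the final claim $\widetilde{\bpsi}_{0,0}=0$, I would simply specialise $\bbeta=0$, in which case the data $\widetilde{\by}^{\bbeta}\partial^{\bbeta}\bF(\bz)$ reduces to the constant vector $\bF(\bz)$. Then the conormal derivative term $\frac{\partial}{\partial\widetilde\bnu}\bF(\bz)$ vanishes identically since the conormal derivative annihilates constants, while the remaining term $(-\tfrac{I}{2}+\Kcal_B^*)\Scal_B^{-1}[\bF(\bz)]$ vanishes by Lemma \ref{prop:property_K_star} applied with $C=\bF(\bz)$. Hence the bracketed right-hand side of \eqref{eq:psi_beta_0} is zero, and so is $\widetilde{\bpsi}_{0,0}$.

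The only non-routine step is the algebraic rearrangement that recognises $\kappa_c$ and the justification that $(\Kcal_B^*-\kappa_c I)$ is invertible on the relevant range; the former is a direct bookkeeping computation, and the latter is subsumed by the already-cited invertibility of $\Acal$, so I expect no serious obstruction.
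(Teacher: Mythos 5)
Your proposal is correct and follows essentially the same route as the paper: collect the $\delta^0$ coefficients in \eqref{eq:bdinteq01} to obtain $\Acal[\widetilde{\bvarphi}_{\bbeta,0},\widetilde{\bpsi}_{\bbeta,0}]^T=[\widetilde{\by}^{\bbeta}\partial^{\bbeta}\bF(\bz),\ \frac{\partial}{\partial\widetilde\bnu}\widetilde{\by}^{\bbeta}\partial^{\bbeta}\bF(\bz)]^T$, eliminate using the first row, factor out $(c-1)$ to expose $\kappa_c$, and for $\bbeta=0$ observe that the conormal-derivative term vanishes for constants and the remaining term vanishes by Lemma~\ref{prop:property_K_star}. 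The only minor divergence is that you fold the invertibility of $\Kcal_B^*-\kappa_c I$ into the invertibility of $\Acal$, whereas the paper defers this point to Remark~2.1 as an explicit hypothesis on $c$ tied to the spectrum of $\mathcal{G}_B$; this is a matter of presentation, not substance.
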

\begin{proof}
 With the help of \eqnref{eq:bdinteq01}, one can obtain that
\[
\left[
   \begin{array}{c}
     \widetilde{\bvarphi}_{\bbeta,0}\medskip \\
     \widetilde{\bpsi}_{\bbeta,0} \\
   \end{array}
 \right]=  \Acal^{-1}
 \left[
   \begin{array}{c}
     \widetilde{y}^{\bbeta} \partial^{\bbeta}\bF(\bz) \medskip\\
     \frac{\partial}{\partial \nu}\widetilde{\by}^{\bbeta} \partial^{\bbeta}\bF(\bz) \\
   \end{array}
 \right],
\]
namely
\beq\label{eq:bdint0101}
 \Scal_B[\widetilde{\bvarphi}_{\bbeta,0}-\widetilde{\bpsi}_{\bbeta,0}]=\widetilde{\by}^{\bbeta} \partial^{\bbeta}\bF(\bz),
\eeq
and
\beq\label{eq:bdint0102}
  c\left(-\frac{I}{2} + \Kcal_B^*\right)[\widetilde{\bvarphi}_{\bbeta,0}] - \left(\frac{I}{2} + \Kcal_B^*\right)[\widetilde{\bpsi}_{\bbeta,0}]= \frac{\partial}{\partial \bnu}\widetilde{\by}^{\bbeta} \partial^{\bbeta}\bF(\bz).
\eeq
Since the operator $\Scal_B$ is invertible from $H^{-1/2}(\partial B)^3$ to $H^{1/2}(\partial B)^3$, by solving \eqnref{eq:bdint0101} and \eqnref{eq:bdint0102}, one thus has \eqnref{eq:psi_beta_0} and \eqnref{eq:psi_phi_0}.
Let $\beta=0$, then \eqnref{eq:psi_beta_0} turns to
\[
  \widetilde{\bpsi}_{0,0}=- \frac{c}{(c-1)}\left(\Kcal^*_B -\kappa_c I\right)^{-1}\left(-\frac{I}{2} + \Kcal^*_B\right)\Scal^{-1}_B\left[\bF(\bz)\right].
\]
Since $\bF(\bz)$ is a constant vector, by using \eqnref{eq:prop01} one thus has  $\widetilde{\bpsi}_{0,0}=0$.

The proof is complete.
\end{proof}
\begin{rem}
In Proposition~\ref{prop:estimate_solution}, we have made use of the invertibility of the operator $\Kcal^*_B -\kappa_c I$. In principle, we need to impose a certain condition on $\kappa_c$, or equivalently on $c$, in order to have the invertibility. So far, we have only assumed that $c\in\mathbb{C}$ and $\Im c\geq 0$. Further condition on $c$ to guarantee the aforesaid invertibility is given in Theorem~\ref{eq:hN} in what follows. In this article, we are mainly interested in the case with $\Re c<0$. In such a case, one can conclude by Theorem~\ref{eq:hN} that if $c$ is such chosen that
\[
 \Im \kappa_c(\kappa_c^2-k_0^2)\neq 0,
\]
where $k_0$ is given in \eqref{eq:hN}, then the operator $\Kcal^*_B -\kappa_c I$ is invertible. The invertibility of $\Kcal^*_B -\kappa_c I$ can hold in more general scenarios and we shall discuss this in more details in Section 3 (cf. Theorem~3.1 and Remark 3.1), and at this stage, we assume the invertibility of $\Kcal^*_B -\kappa_c I$. It is also worth of pointing out that the invertibility of $\Kcal^*_B -\kappa_c I$ clearly also guarantees the well-posedness of the elastic scattering system \eqref{eq:form1}--\eqref{eq:radiating} for plasmonic nanoparticles.
\end{rem}

After obtaining the asymptotic expression for $ \widetilde{\bpsi}_{\bbeta}$, we are in a position to derive the estimate for $M_{\balpha,\bbeta}$ with $|\balpha|\leq2$ and $|\bbeta|\leq2$.
\begin{prop}\label{prop:estimate_solution2}
There holds the following estimate for $M_{\balpha,\bbeta}$ with $\balpha\in\mathbb{N}^3$ and $\bbeta=0$
\begin{equation}\label{eq:estimate_M_alpha_0}
  M_{\balpha,0}= \delta^2  \int_{\partial B}  \widetilde{\by}^{\balpha} \widetilde{\bpsi}_{0,2}(\widetilde{\by})ds(\widetilde{\by}) + \mathcal{O}(\delta^3),
\end{equation}
where
\begin{equation}\label{eq:psi_02}
  \widetilde{\bpsi}_{0,2}=\frac{-c\omega_1^2}{(c-1)}\left(\Kcal^*_B - \kappa_c I\right)^{-1}\left(\mathcal{P}_B- \left(-\frac{I}{2} + \Kcal^*_B\right)\Scal^{-1}_B \mathcal{I}_B \right)\left[ \Scal^{-1}_B\left[ \bF(\bz)\right]\right],
\end{equation}
with $\mathcal{I}_{B}$ and $\mathcal{P}_{B}$ defined in \eqref{eq:definition_operator_I} and \eqref{eq:definition_operator_P} respectively.
\end{prop}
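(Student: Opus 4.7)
The plan is to expand $\widetilde{\bpsi}_0 = \sum_n \delta^n \widetilde{\bpsi}_{0,n}$ as in \eqref{eq:phipsiexp02} and match powers of $\delta$ in the system \eqref{eq:bdinteq01} specialised to $\bbeta=0$. Proposition~\ref{prop:estimate_solution} already supplies $\widetilde{\bpsi}_{0,0}=0$ and $\widetilde{\bvarphi}_{0,0}=\Scal_B^{-1}[\bF(\bz)]$. I aim to show that $\widetilde{\bpsi}_{0,1}$ also vanishes, so that the first nontrivial contribution to $M_{\balpha,0}$ arises at order $\delta^2$ and is entirely determined by $\widetilde{\bpsi}_{0,2}$, which I extract explicitly by solving the $\delta^2$ equation.

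For the $\delta^1$ level, note first that $\widetilde{\by}^0\partial^0\bF(\bz)=\bF(\bz)$ is a constant vector on $\partial B$, so its Lam\'e conormal derivative vanishes; the right-hand side of \eqref{eq:bdinteq01} with $\bbeta=0$ therefore reduces to $[\bF(\bz);\,0]^T$. Matching $\delta^1$ coefficients gives $\Acal[\widetilde{\bvarphi}_{0,1};\widetilde{\bpsi}_{0,1}] = -\mathcal{T}[\widetilde{\bvarphi}_{0,0};\widetilde{\bpsi}_{0,0}] = [-C_1;\,0]^T$, where $C_1:=\omega_1\Rcal_B\Scal_B^{-1}[\bF(\bz)]$ is a constant vector on $\partial B$ by the definition of $\Rcal_B$ in \eqref{eq:definition_operator_R}. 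Eliminating $\widetilde{\bvarphi}_{0,1}=\widetilde{\bpsi}_{0,1}-\Scal_B^{-1}[C_1]$ from the top row and substituting into the bottom row, the crucial cancellation is $(-I/2+\Kcal_B^*)\Scal_B^{-1}[C_1]=0$ from Lemma~\ref{prop:property_K_star}, which collapses the equation to $(c-1)(\Kcal_B^*-\kappa_c I)\widetilde{\bpsi}_{0,1}=0$. Since $\Kcal_B^*-\kappa_c I$ is assumed invertible, $\widetilde{\bpsi}_{0,1}=0$ and consequently $\widetilde{\bvarphi}_{0,1}=-\Scal_B^{-1}[C_1]$.

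At the $\delta^2$ level, the equation becomes $\Acal[\widetilde{\bvarphi}_{0,2};\widetilde{\bpsi}_{0,2}] = -\mathcal{T}[\widetilde{\bvarphi}_{0,1};0]-\mathcal{N}[\Scal_B^{-1}[\bF(\bz)];0]$, whose right-hand side equals $[-C_2-\omega_1^2\mathcal{I}_B\Scal_B^{-1}[\bF(\bz)];\,-c\omega_1^2\mathcal{P}_B\Scal_B^{-1}[\bF(\bz)]]^T$ with $C_2:=-\omega_1\Rcal_B\widetilde{\bvarphi}_{0,1}=\omega_1^2\Rcal_B\Scal_B^{-1}[C_1]$ again a constant vector on $\partial B$. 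Following the same elimination strategy, Lemma~\ref{prop:property_K_star} once more annihilates the contribution $(-I/2+\Kcal_B^*)\Scal_B^{-1}[C_2]$, leaving $(c-1)(\Kcal_B^*-\kappa_c I)\widetilde{\bpsi}_{0,2}=-c\omega_1^2[\mathcal{P}_B-(-I/2+\Kcal_B^*)\Scal_B^{-1}\mathcal{I}_B]\Scal_B^{-1}[\bF(\bz)]$, which inverts to \eqref{eq:psi_02}. Substituting $\widetilde{\bpsi}_0=\delta^2\widetilde{\bpsi}_{0,2}+\mathcal{O}(\delta^3)$ into the definition \eqref{eq:def_M} of $M_{\balpha,0}$ then yields \eqref{eq:estimate_M_alpha_0}.

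The main obstacle will be the careful bookkeeping of which pieces produced by $\Rcal_B$ remain constant vectors on $\partial B$ so that Lemma~\ref{prop:property_K_star} can be invoked at both the $\delta^1$ and $\delta^2$ levels; without this structural observation one would be stuck with a stray $(-I/2+\Kcal_B^*)\Scal_B^{-1}[\,\cdot\,]$ applied to a non-constant argument, destroying both the vanishing of $\widetilde{\bpsi}_{0,1}$ and the clean resolvent form of $\widetilde{\bpsi}_{0,2}$.
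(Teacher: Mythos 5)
Your argument is correct and follows the paper's strategy exactly: expand order-by-order in $\delta$, use the block structure of $\Acal$ to eliminate $\widetilde{\bvarphi}_{0,k}$ from the first row, and invoke Lemma~\ref{prop:property_K_star} to annihilate the constant-vector terms produced by $\Rcal_B$ at both the $\delta^1$ and $\delta^2$ levels. There are two harmless slips in your identification of $C_2$ (with your definition $C_2=-\omega_1\Rcal_B\widetilde{\bvarphi}_{0,1}$ the first component of the right-hand side should be $C_2-\omega_1^2\mathcal{I}_B\Scal_B^{-1}[\bF(\bz)]$, not $-C_2-\omega_1^2\mathcal{I}_B\Scal_B^{-1}[\bF(\bz)]$, and the stated value $\omega_1^2\Rcal_B\Scal_B^{-1}[C_1]$ carries an extra factor of $\omega_1$), but since the only property of $C_2$ used is that it is a constant vector annihilated by Lemma~\ref{prop:property_K_star}, these do not affect the derivation of \eqref{eq:psi_02}.
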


\begin{proof}
Firstly, from the proposition \ref{prop:property_K_star}, one has that
\begin{equation}
  \widetilde{\bpsi}_{0, 0}=0 \quad \mbox{and} \quad \widetilde{\bvarphi}_{0, 0}= \Scal^{-1}_B\left[ \bF(\bz)\right].
\end{equation}
By comparing the coefficients of order $\delta$ in \eqnref{eq:bdinteq01}, one has that
\begin{equation}\label{eq:precise_equation_psi_0}
\Acal \left[
        \begin{array}{c}
          \widetilde{\bvarphi}_{0, 1}\medskip \\
          \widetilde{\bpsi}_{0, 1} \\
        \end{array}
      \right] +
\mathcal{T} \left[
        \begin{array}{c}
          \widetilde{\bvarphi}_{0, 0}\medskip \\
          0 \\
        \end{array}
      \right]=0,
\end{equation}
where the operator $\Acal$ is given in \eqref{eq:definition_operator_A}.
By solving the equation \eqref{eq:precise_equation_psi_0} one derives that
\begin{equation}\label{eq:psi_01}
  \widetilde{\bpsi}_{0, 1}=\frac{c\omega_1}{(c-1)}\left(\Kcal^*_B - \kappa_c I\right)^{-1}\left(\left(-\frac{I}{2} + \Kcal^*_B\right)\Scal^{-1}_B \Rcal_B[ \widetilde{\bvarphi}_{0, 0}] \right).
\end{equation}
with $\Rcal_{B}$ given in \eqref{eq:definition_operator_R}.
Since $\Rcal_B[ \widetilde{\bvarphi}_{0, 0}]$ is a constant, by Lemma \ref{prop:property_K_star} and \eqref{eq:precise_equation_psi_0} one can obtain that
\[
  \widetilde{\bpsi}_{0, 1}=0 \quad \mbox{and} \quad \widetilde{\bvarphi}_{0, 1}=-\omega_1 \Scal^{-1}_B\Rcal_B[ \widetilde{\bvarphi}_{0, 0}].
\]
Furthermore, by comparing the coefficients of order $\delta^2$ in \eqnref{eq:bdinteq01}, one has that
\begin{equation}\label{eq:more_precise_equation_psi_0}
\Acal \left[
        \begin{array}{c}
          \widetilde{\bvarphi}_{0, 2}\medskip \\
          \widetilde{\bpsi}_{0, 2} \\
        \end{array}
      \right] +
\mathcal{T} \left[
        \begin{array}{c}
          \widetilde{\bvarphi}_{0, 1}\medskip \\
          0 \\
        \end{array}
      \right] +
\mathcal{N} \left[
        \begin{array}{c}
          \widetilde{\bvarphi}_{0, 0}\medskip \\
          0 \\
        \end{array}
      \right]=0,
\end{equation}
where $\mathcal{T}$ and $\mathcal{N}$ are defined in \eqnref{eq:definition_operator_A} and \eqnref{eq:definition_operator_N}, respectively.
By solving the above equation one can find that
\begin{equation}
  \widetilde{\bpsi}_{0,2}=\frac{-c\omega_1^2}{(c-1)}\left(\Kcal^*_B - \kappa_c I\right)^{-1}\left(\mathcal{P}_B- \left(-\frac{I}{2} + \Kcal^*_B\right)\Scal^{-1}_B \mathcal{I}_B \right)[\widetilde{\bvarphi}_{0, 0} ],
\end{equation}
with $\mathcal{I}_{B}$ and $\mathcal{P}_{B}$ defined in \eqref{eq:definition_operator_I} and \eqref{eq:definition_operator_P} respectively.
Finally, by the definition of $M_{\balpha,0}$ in \eqnref{eq:def_M}, one has that
\begin{equation}
  M_{\balpha,0}= \delta^2  \int_{\partial B}  \widetilde{\by}^{\balpha} \widetilde{\bpsi}_{0,2}(\widetilde{\by})ds(\widetilde{\by}) + \mathcal{O}(\delta^3),
\end{equation}
and the proof is completed.
\end{proof}

\begin{prop}\label{prop:estimate_solution3}
One has the following estimate
\begin{equation}\label{eq:estimate_M_0_beta}
  \sum_{|\bbeta|=1} M_{0,\bbeta}=\mathcal{O}(\delta^2).
\end{equation}
\end{prop}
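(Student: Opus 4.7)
The plan is to expand $M_{0,\bbeta}=\sum_{n\ge 0}\delta^n\int_{\partial B}\widetilde{\bpsi}_{\bbeta,n}\,ds$ via $\widetilde{\bpsi}_\bbeta=\sum_{n\ge0}\delta^n\widetilde{\bpsi}_{\bbeta,n}$ and show that, after summation over $|\bbeta|=1$, both the $\delta^0$ and $\delta^1$ terms cancel, leaving only an $\mathcal{O}(\delta^2)$ remainder. The linchpin of the argument is the zero-average identity
\[
\int_{\partial B}\Kcal_B^*[\bphi]\,ds\;=\;\tfrac{1}{2}\int_{\partial B}\bphi\,ds,\qquad \bphi\in H^{-1/2}(\partial B)^3,
\]
which I would derive by pairing $\Kcal_B^*\bphi$ against an arbitrary constant vector $\mathbf{a}$ and invoking the adjointness $\la\Kcal_B^*\bphi,\mathbf{a}\ra=\la\bphi,\Kcal_B\mathbf{a}\ra$ together with Lemma~\ref{prop:spectrum_K}.

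For the $\delta^0$ piece, I integrate the formula \eqref{eq:psi_beta_0} for $\widetilde{\bpsi}_{\bbeta,0}$ over $\partial B$. Combining the above identity with $\tfrac{1}{2}-\kappa_c=-\tfrac{1}{c-1}$ collapses the left-hand side to $-\int_{\partial B}\widetilde{\bpsi}_{\bbeta,0}\,ds$, while the $c(-I/2+\Kcal_B^*)\Scal_B^{-1}$-term on the right-hand side integrates to zero by the same identity, yielding
\[
\int_{\partial B}\widetilde{\bpsi}_{\bbeta,0}\,ds\;=\;-\int_{\partial B}\frac{\partial}{\partial\bnu}\bigl(\widetilde{\by}^\bbeta\partial^\bbeta\bF(\bz)\bigr)\,ds.
\]
Summing over $|\bbeta|=1$, the integrand becomes $\partial_\bnu\bV$ with $\bV(\widetilde{\by}):=\sum_{j=1}^3\widetilde{\by}_j\partial_j\bF(\bz)=(\nabla\bF(\bz))\widetilde{\by}$. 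Since $\bV$ is linear in $\widetilde{\by}$, $\mathcal{L}_{\lambda,\mu}\bV\equiv0$, so the divergence theorem applied to the associated stress tensor $\sigma[\bV]$ forces $\int_{\partial B}\partial_\bnu\bV\,ds=\int_B\mathcal{L}_{\lambda,\mu}\bV\,dV=0$. Hence $\sum_{|\bbeta|=1}\int_{\partial B}\widetilde{\bpsi}_{\bbeta,0}\,ds=0$.

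For the $\delta^1$ piece, matching coefficients of $\delta$ in \eqref{eq:bdinteq01} produces the system
\[
\Acal\begin{pmatrix}\widetilde{\bvarphi}_{\bbeta,1}\\ \widetilde{\bpsi}_{\bbeta,1}\end{pmatrix}=-\begin{pmatrix}\gamma_\bbeta\\ 0\end{pmatrix},\qquad \gamma_\bbeta:=\omega_1\Rcal_B[\widetilde{\bvarphi}_{\bbeta,0}]-\omega_2\Rcal_B[\widetilde{\bpsi}_{\bbeta,0}],
\]
in which $\gamma_\bbeta$ is a constant vector by the definition of $\Rcal_B$. Solving this system exactly as in the derivation of \eqref{eq:psi_01} in Proposition~\ref{prop:estimate_solution2} gives
\[
\widetilde{\bpsi}_{\bbeta,1}=\frac{c}{c-1}\bigl(\Kcal_B^*-\kappa_c I\bigr)^{-1}\Bigl(-\frac{I}{2}+\Kcal_B^*\Bigr)\Scal_B^{-1}[\gamma_\bbeta],
\]
which vanishes by Lemma~\ref{prop:property_K_star} applied to the constant $\gamma_\bbeta$. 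Thus $\widetilde{\bpsi}_{\bbeta,1}\equiv0$ for every $\bbeta$, and the conclusion $\sum_{|\bbeta|=1}M_{0,\bbeta}=0+0+\mathcal{O}(\delta^2)$ follows. The main obstacle is spotting the two independent cancellations in the $\delta^0$ step: the zero-average property of $\Kcal_B^*-\tfrac{1}{2}I$ (which disposes of the $\Scal_B^{-1}$-term and simplifies the leading coefficient on the LHS) and the Lam\'e-harmonicity of the linear field $(\nabla\bF(\bz))\widetilde{\by}$ (which annihilates the surviving conormal-derivative integral); once these are in hand, the $\delta^1$ step is a direct repetition of the argument leading to \eqref{eq:psi_01}.
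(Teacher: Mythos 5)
Your proof is correct and follows essentially the same route as the paper: both arguments hinge on the $\tfrac12$-eigenvalue identity $\int_{\partial B}\Kcal_B^*[\bphi]\,ds=\tfrac12\int_{\partial B}\bphi\,ds$ (a consequence of Lemma~\ref{prop:spectrum_K} by duality), the $\mathcal{L}_{\lambda,\mu}$-harmonicity of the linear field $\widetilde{\by}^{\bbeta}\partial^{\bbeta}\bF(\bz)$ for $|\bbeta|=1$, and Lemma~\ref{prop:property_K_star} applied to the constant vector produced by $\Rcal_B$ in the $\delta^1$ step. The only cosmetic difference is that in the $\delta^0$ step you integrate the solved formula \eqref{eq:psi_beta_0} over $\partial B$ and invoke the $\tfrac12$-eigenvalue identity twice, whereas the paper integrates the second transmission equation \eqref{eq:system_varphi_psi_beta} and uses harmonicity of the single-layer potential in $B$ together with the jump relation; both yield the same intermediate identity $\int_{\partial B}\widetilde{\bpsi}_{\bbeta,0}\,ds=-\int_{\partial B}\partial_\bnu\bigl(\widetilde{\by}^{\bbeta}\partial^{\bbeta}\bF(\bz)\bigr)ds$, and you (correctly) keep the $-\omega_2\Rcal_B[\widetilde{\bpsi}_{\bbeta,0}]$ contribution in $\gamma_{\bbeta}$ that the paper drops only because it has already established $\int_{\partial B}\widetilde{\bpsi}_{\bbeta,0}\,ds=0$.
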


\begin{proof}
From the proof of Proposition \ref{prop:estimate_solution}, one has that for $\widetilde{\by}\in\partial B$,
\begin{equation}\label{eq:system_varphi_psi_beta}
  \left\{
    \begin{array}{ll}
      \Scal_B[\widetilde{\bvarphi}_{\bbeta,0}]- \Scal_B[ \widetilde{\bpsi}_{\bbeta,0} ] =  \widetilde{\by}^{\bbeta} \partial^{\bbeta}\bF(\bz),\medskip\\
      c\frac{\partial}{\partial\bnu} \Scal_B[\widetilde{\bvarphi}_{\bbeta,0}]|_- - \frac{\partial}{\partial\bnu} \Scal_B[\widetilde{\bpsi}_{\bbeta,0}]|_+ = \frac{\partial}{\partial \bnu}\widetilde{\by}^{\bbeta} \partial^{\bbeta}\bF(\bz).
    \end{array}
  \right.
\end{equation}
For $|\beta|=1$, one has that
\[
  \int_{\partial B} \frac{\partial}{\partial\bnu} \left(\widetilde{\by}^{\bbeta} \partial^{\bbeta}\bF(\bz)\right) ds(\widetilde{\by}) =0,
\]
which follows from Green's formula and
\[
  \mathcal{L}_{\lambda,\mu} \left(\widetilde{\by}^{\bbeta} \partial^{\bbeta}\bF(\bz)\right)  = 0, \quad \widetilde{\by}\in B.
\]
Using integration by parts, one can similarly show that
\[
 \int_{\partial B}\frac{\partial}{\partial\bnu} \Scal_B[\widetilde{\bvarphi}_{\bbeta,0}]|_-ds(\widetilde{\by}) = 0 \quad \mbox{and} \quad \int_{\partial B} \frac{\partial}{\partial\bnu} \Scal_B[\widetilde{\bpsi}_{\bbeta,0}]|_- ds(\widetilde{\by})=0.
\]
Therefore from the second equation of \eqref{eq:system_varphi_psi_beta}, one can obtain that
\[
 \int_{\partial B} \frac{\partial}{\partial\bnu} \Scal_B[\widetilde{\bpsi}_{\bbeta,0}]|_+ ds(\widetilde{\by})=0.
\]
With the help of the jump relationship \eqref{eq:jump_single} and Lemma \ref{prop:spectrum_K}, one has that
\begin{equation}
  \int_{\partial B} \widetilde{\bpsi}_{\bbeta,0}(\widetilde{\by})ds(\widetilde{\by})=0, \quad \mbox{for} \quad |\bbeta|=1.
\end{equation}
By using \eqnref{eq:bdinteq01} again, one can further obtain, similar to \eqnref{eq:psi_01}, that
\begin{equation}
  \widetilde{\bpsi}_{\bbeta, 1}=\frac{c\omega_1}{(c-1)}\left(\Kcal^*_B - \kappa_c I\right)^{-1}\left(\left(-\frac{I}{2} + \Kcal^*_B\right)\Scal^{-1}_B \Rcal_B[ \widetilde{\bvarphi}_{\bbeta, 0}] \right),
\end{equation}
and by using Lemma \ref{prop:property_K_star} and noting that $\Rcal_B[ \widetilde{\bvarphi}_{\bbeta, 0}]$ is a constant vector, one can in turn derive that
\[
 \widetilde{\bpsi}_{\bbeta, 1}=0, \quad \mbox{for} \quad |\bbeta|=1.
\]
Thus from the definition \eqnref{eq:def_M}, one can obtain that
\begin{equation}
  \sum_{|\bbeta|=1} M_{0,\bbeta}=\mathcal{O}(\delta^2).
\end{equation}
This proof is complete.
\end{proof}

From the last two Propositions \ref{prop:estimate_solution2} and \ref{prop:estimate_solution3}, one can show that the first three terms in \eqnref{eq:mainexp01}  satisfy
\[
 \delta \left(\bGamma^{\omega_2}(\bx-\bz) M_{0,0}\right)  +\delta^2 \left(\sum_{|\bbeta|=1} \bGamma^{\omega_2}(\bx-\bz) M_{0,\bbeta}-\sum_{|\balpha|=1}  \partial^{\balpha} \bGamma^{\omega_2}(\bx-\bz) M_{\balpha,0} \right)=\mathcal{O}(\delta^4).
\]
Therefore from \eqnref{eq:mainexp01}, the far-field expansion for the displacement $\bu(\bx)$ can be written as
\begin{equation}\label{eq:final_scattering}
 \begin{split}
 \bu(\bx)-\bF(\bx) =& \delta^{3}  \Big(\bGamma^{\omega_2}(\bx-\bz) \int_{\partial B} \widetilde{\bpsi}_{0, 2}(\widetilde{\by})ds(\widetilde{\by}) +\sum_{|\bbeta|=2}
\frac{1}{2} \bGamma^{\omega_2}(\bx-\bz)\int_{\partial B} \widetilde{\bpsi}_{\bbeta,0}(\widetilde{\by})ds(\widetilde{\by})\\
 &- \sum_{|\balpha|=1} \sum_{|\bbeta|=1}\partial^{\balpha} \bGamma^{\omega_2}(\bx-\bz)\int_{\partial B} \widetilde{\by}^{\balpha} \widetilde{\bpsi}_{\bbeta,0}(\widetilde{\by})ds(\widetilde{\by}) \Big)
     +\mathcal{O}(\delta^4),
 \end{split}
\end{equation}
with $\widetilde{\bpsi}_{\bbeta,0}$ and $ \widetilde{\bpsi}_{0, 2}$ given in \eqref{eq:psi_beta_0} and  \eqref{eq:psi_02}, respectively.
The following lemmas show the detailed calculation of the three terms appeared in the right hand side of \eqref{eq:final_scattering}.

\begin{lem}\label{le:asym0101}
There holds the following identity
\begin{equation}\label{eq:estmate1st02}
\sum_{|\bbeta|=2} \int_{\partial B} \widetilde{\bpsi}_{\bbeta,0}(\widetilde{\by})ds(\widetilde{\by})=-2|B|  (\mathcal{L}_{\lambda,\mu} \bF)(\bz).
\end{equation}
\end{lem}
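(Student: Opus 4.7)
The plan is to derive, for each multi-index $\bbeta$ with $|\bbeta|=2$, a closed-form expression for $\int_{\partial B}\widetilde{\bpsi}_{\bbeta,0}(\widetilde{\by})\,ds(\widetilde{\by})$ by integrating the second row of the boundary integral system \eqref{eq:system_varphi_psi_beta} over $\partial B$, and then to sum over $|\bbeta|=2$ using a Taylor-expansion identity for the Lam\'e operator $\mathcal{L}_{\lambda,\mu}$.

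First, I would integrate the identity
\begin{equation*}
  c\frac{\partial}{\partial\bnu}\Scal_B[\widetilde{\bvarphi}_{\bbeta,0}]\big|_{-} - \frac{\partial}{\partial\bnu}\Scal_B[\widetilde{\bpsi}_{\bbeta,0}]\big|_{+} = \frac{\partial}{\partial\bnu}\big[\widetilde{\by}^{\bbeta}\partial^{\bbeta}\bF(\bz)\big]
\end{equation*}
over $\partial B$ and treat each piece separately. The interior single-layer term vanishes because $\Scal_B[\widetilde{\bvarphi}_{\bbeta,0}]$ is $\mathcal{L}_{\lambda,\mu}$-harmonic in $B$, so the Lam\'e Green's formula yields $\int_{\partial B}\partial_{\bnu}\Scal_B[\widetilde{\bvarphi}_{\bbeta,0}]|_{-}\,ds=0$. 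For the exterior term, I would combine the jump relation \eqref{eq:jump_single} with the adjoint identity $\int_{\partial B}\Kcal^*_B[\bphi]\,ds=\tfrac{1}{2}\int_{\partial B}\bphi\,ds$, which follows from Lemma \ref{prop:spectrum_K} by the $L^2$ duality between $\Kcal_B$ and $\Kcal^*_B$, to deduce $\int_{\partial B}\partial_{\bnu}\Scal_B[\widetilde{\bpsi}_{\bbeta,0}]|_{+}\,ds=\int_{\partial B}\widetilde{\bpsi}_{\bbeta,0}\,ds$. For the right-hand side, one more application of the Lam\'e Green's formula gives
\begin{equation*}
\int_{\partial B}\frac{\partial}{\partial\bnu}\big[\widetilde{\by}^{\bbeta}\partial^{\bbeta}\bF(\bz)\big]\,ds=\int_B\mathcal{L}_{\lambda,\mu}\big[\widetilde{\by}^{\bbeta}\partial^{\bbeta}\bF(\bz)\big]\,d\widetilde{\by},
\end{equation*}
and since $\widetilde{\by}^{\bbeta}$ is a quadratic polynomial when $|\bbeta|=2$, the integrand is a constant vector, so the integral collapses to $|B|\,\mathcal{L}_{\lambda,\mu}[\widetilde{\by}^{\bbeta}\partial^{\bbeta}\bF(\bz)]$.

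Putting these three facts together yields the per-$\bbeta$ formula $\int_{\partial B}\widetilde{\bpsi}_{\bbeta,0}(\widetilde{\by})\,ds(\widetilde{\by})=-|B|\,\mathcal{L}_{\lambda,\mu}[\widetilde{\by}^{\bbeta}\partial^{\bbeta}\bF(\bz)]$. Summing over $|\bbeta|=2$ and invoking the Taylor-expansion identity $\mathcal{L}_{\lambda,\mu}\bF(\bz)=\sum_{|\bbeta|=2}\tfrac{1}{\bbeta!}\mathcal{L}_{\lambda,\mu}[\widetilde{\by}^{\bbeta}\partial^{\bbeta}\bF(\bz)]$, obtained by applying $\mathcal{L}_{\lambda,\mu}$ termwise to $\bF(\widetilde{\by}+\bz)=\sum_{\bbeta}\tfrac{1}{\bbeta!}\widetilde{\by}^{\bbeta}\partial^{\bbeta}\bF(\bz)$ and observing that only the second-order terms survive at $\widetilde{\by}=0$ (since $\mathcal{L}_{\lambda,\mu}$ annihilates constants and linear polynomials), then delivers \eqref{eq:estmate1st02} after the combinatorial accounting of the multi-index factors $\bbeta!$.

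The main technical point I expect is the identity $\int_{\partial B}\partial_{\bnu}\Scal_B[\widetilde{\bpsi}_{\bbeta,0}]|_{+}\,ds=\int_{\partial B}\widetilde{\bpsi}_{\bbeta,0}\,ds$, since a direct integration by parts in the unbounded exterior of $B$ is delicate: the Kelvin fundamental solution decays only like $|\bx|^{-1}$, and the leading behaviour of $\Scal_B[\widetilde{\bpsi}_{\bbeta,0}]$ at infinity contributes a nonvanishing flux on a large sphere. Routing the argument through the jump relation together with the adjoint of Lemma \ref{prop:spectrum_K} bypasses this difficulty cleanly, after which the remainder reduces to interior Green's formula and the Taylor expansion of $\bF$ at $\bz$.
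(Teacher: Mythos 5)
Your proposal takes essentially the same route as the paper's proof. You integrate the second transmission identity over $\partial B$ and use (i) the interior Green's formula to annihilate the $\widetilde{\bvarphi}_{\bbeta,0}$ term, (ii) the exterior jump relation together with the adjoint identity $\int_{\partial B}\Kcal^*_B[\bphi]\,ds=\tfrac12\int_{\partial B}\bphi\,ds$ from Lemma \ref{prop:spectrum_K}, and (iii) Green's formula again for the data term, arriving at the per-$\bbeta$ formula $\int_{\partial B}\widetilde{\bpsi}_{\bbeta,0}\,ds=-|B|\,\mathcal{L}_{\lambda,\mu}\big[\widetilde{\by}^{\bbeta}\partial^{\bbeta}\bF(\bz)\big]$. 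The paper does the same integration, only starting from the solved-for expression \eqref{eq:der1} and using the algebraic fact $(c-1)\big(\tfrac12-\kappa_c\big)=-1$ on the left; this is equivalent to your interior-Green elimination of $\widetilde{\bvarphi}_{\bbeta,0}$. So up to the last step the two arguments are interchangeable.

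The last step is where both your proposal and the paper's \eqref{eq:der4} are terse, and where I would urge caution. The Taylor identity you quote is $\mathcal{L}_{\lambda,\mu}\bF(\bz)=\sum_{|\bbeta|=2}\tfrac{1}{\bbeta!}\mathcal{L}_{\lambda,\mu}\big[\widetilde{\by}^{\bbeta}\partial^{\bbeta}\bF(\bz)\big]$, which combined with the per-$\bbeta$ formula gives the \emph{weighted} identity $\sum_{|\bbeta|=2}\tfrac{1}{\bbeta!}\int_{\partial B}\widetilde{\bpsi}_{\bbeta,0}\,ds=-|B|\,\mathcal{L}_{\lambda,\mu}\bF(\bz)$. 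Passing to the unweighted sum in \eqref{eq:estmate1st02} would require $\bbeta!\equiv 2$ for all $|\bbeta|=2$; but $\bbeta!=1$ for the mixed indices $(1,1,0)$, $(1,0,1)$, $(0,1,1)$. For instance, taking $\bF(\by)=(y_1y_2,0,0)^T$ one finds $\sum_{|\bbeta|=2}\mathcal{L}_{\lambda,\mu}\big[\widetilde{\by}^{\bbeta}\partial^{\bbeta}\bF(\bz)\big]=(\lambda+\mu)\be_2$, whereas $2\mathcal{L}_{\lambda,\mu}\bF(\bz)=2(\lambda+\mu)\be_2$. So \emph{the ``combinatorial accounting'' you defer does not actually close the gap}; your proof really establishes the weighted identity. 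The paper's Eq.~\eqref{eq:der4} quietly makes the same jump, but note that in \eqref{eq:final_scattering} it also replaces $1/\bbeta!$ by $1/2$, and these two normalization slips cancel, so Theorem \ref{th:farfexp01} is unaffected. If you want a self-contained, airtight version, state and use the weighted identity $\sum_{|\bbeta|=2}\tfrac{1}{\bbeta!}\int_{\partial B}\widetilde{\bpsi}_{\bbeta,0}\,ds=-|B|\,\mathcal{L}_{\lambda,\mu}\bF(\bz)$, and carry $1/\bbeta!$ (rather than $1/2$) through \eqref{eq:final_scattering}.
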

\begin{proof}
Recalling the expression for $\widetilde{\bpsi}_{\bbeta,0}$ given in \eqref{eq:psi_beta_0}, one has that
\begin{equation}\label{eq:der1}
 (c-1)\left(\Kcal^*_B - \kappa_c I\right)[\widetilde{\bpsi}_{\bbeta,0}]=\frac{\partial}{\partial \bnu}\widetilde{\by}^{\bbeta} \partial^{\bbeta}\bF(\bz)- c\left(-\frac{1}{2} + \Kcal^*_B\right)\Scal^{-1}_B\left[\widetilde{\by}^{\bbeta} \partial^{\bbeta} \bF(\bz)\right].
\end{equation}
By using Lemma \ref{prop:spectrum_K}, one can obtain that for $\bvarphi\in H^{-1/2}(\partial B)^3$,
\[
\begin{split}
 &\int_{\partial B}\Kcal^*_B[\bvarphi] ds =  \sum_{i=1}^3 \be_i \int_{\partial B}(\Kcal^*_B[\bvarphi])\cdot \be_i ds\\
 = &\sum_{i=1}^3 \be_i \int_{\partial B}\bvarphi \cdot \Kcal_B[\be_i]  =  \sum_{i=1}^3 \frac{\be_i}{2} \int_{\partial B}\bvarphi\cdot \be_i ds = \frac{1}{2} \int_{\partial B}\bvarphi ds.
\end{split}
\]
Therefore
\begin{equation}\label{eq:der2}
 \int_{\partial B} (c-1)\left(\Kcal^*_B - \kappa_c I\right)[\widetilde{\bpsi}_{\bbeta,0}] ds=-\int_{\partial B} \widetilde{\bpsi}_{\bbeta,0} ds,
\end{equation}
and
\begin{equation}\label{eq:der3}
 \int_{\partial B} \left(-\frac{1}{2} + \Kcal^*_B\right)\Scal^{-1}_B\left[\widetilde{\by}^{\bbeta} \partial^{\bbeta} \partial^{\bbeta} \bF(\bz)\right]ds =0.
\end{equation}
With the help of Green's formula, there holds the following
\begin{equation}\label{eq:der4}
\sum_{|\bbeta|=2} \int_{\partial B} \frac{\partial}{\partial \bnu}\widetilde{\by}^{\bbeta} \partial^{\bbeta}\bF(\bz) ds = \sum_{|\bbeta|=2} \int_{ B} \mathcal{L}_{\lambda,\mu}\widetilde{\by}^{\bbeta} \partial^{\bbeta}\bF(\bz) dv =2|B|  (\mathcal{L}_{\lambda,\mu} \bF)(\bz),
\end{equation}
where $|B|$ is the volume of the domain $B$. Finally, from \eqref{eq:der1}, \eqref{eq:der2}, \eqref{eq:der3} and \eqref{eq:der4}, one can thus obtain \eqnref{eq:estmate1st02}, which completes the proof.
\end{proof}

\begin{lem}\label{le:asym0102}
There holds the following identity
\beq\label{eq:estmate1st01}
\int_{\partial B} \widetilde{\bpsi}_{0, 2}(\widetilde{\by})ds(\widetilde{\by})=-\omega^2|B|\bF(\bz).
\eeq
\end{lem}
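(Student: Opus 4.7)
The plan is to mirror the structure of the preceding Lemma~\ref{le:asym0101}: apply $(c-1)(\Kcal^*_B-\kappa_cI)$ to both sides of the defining formula \eqref{eq:psi_02} for $\widetilde{\bpsi}_{0,2}$ and then integrate over $\partial B$, so that the unknown on the left collapses to $\int_{\partial B}\widetilde{\bpsi}_{0,2}\,ds$ via the identity $\int_{\partial B}\Kcal^*_B[\bvarphi]\,ds=\tfrac12\int_{\partial B}\bvarphi\,ds$ that was established in the proof of Lemma~\ref{le:asym0101} using Lemma~\ref{prop:spectrum_K}. A direct expansion of $(c-1)(\Kcal^*_B-\kappa_c I)$ together with $\kappa_c=\tfrac{c+1}{2(c-1)}$ gives, after integration, the identity $\int_{\partial B}(c-1)(\Kcal^*_B-\kappa_cI)[\widetilde{\bpsi}_{0,2}]\,ds=-\int_{\partial B}\widetilde{\bpsi}_{0,2}\,ds$, just as in the previous lemma.

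Next I would handle the two terms on the right-hand side of \eqref{eq:psi_02} separately. The contribution from $\bigl(-\tfrac{I}{2}+\Kcal^*_B\bigr)\Scal_B^{-1}\mathcal{I}_B$ vanishes after integration, because for any $\bvarphi\in H^{-1/2}(\partial B)^3$ the same eigenvalue computation yields $\int_{\partial B}\bigl(-\tfrac{I}{2}+\Kcal^*_B\bigr)[\bvarphi]\,ds=0$. Thus only the $\mathcal{P}_B$ term survives, and everything reduces to computing $\int_{\partial B}\mathcal{P}_B[\Scal^{-1}_B[\bF(\bz)]]\,ds$.

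For that integral I would write $\mathcal{P}_B[\bvarphi]$ as the boundary trace of the conormal derivative of $\mathcal{I}_B[\bvarphi]$ (which is legitimate because the kernel $\mathbf{\Lambda}$ in \eqref{eq:definition_Lambda} is continuous, so the volume potential $\mathcal{I}_B[\bvarphi]$ is $C^{1}$ up to $\partial B$ and no jump appears). Then Green's formula gives
\[
\int_{\partial B}\mathcal{P}_B[\bvarphi]\,ds=\int_{B}\mathcal{L}_{\lambda,\mu}\mathcal{I}_B[\bvarphi]\,dv=-\int_{B}\Scal_B[\bvarphi]\,dv,
\]
where the last equality is exactly Lemma~\ref{le:imp01}. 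Choosing $\bvarphi=\Scal^{-1}_B[\bF(\bz)]$ and invoking the Dirichlet argument in Lemma~\ref{prop:property_K_star} (uniqueness of the Lam\'e Dirichlet problem implies that $\Scal_B[\Scal^{-1}_B[\bF(\bz)]]$ equals the constant $\bF(\bz)$ not only on $\partial B$ but throughout $B$), the volume integral collapses to $|B|\,\bF(\bz)$, hence $\int_{\partial B}\mathcal{P}_B[\Scal^{-1}_B[\bF(\bz)]]\,ds=-|B|\,\bF(\bz)$.

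Putting the pieces together and using $c\omega_1^2=\omega^2$ (from $\omega_1=\omega/\sqrt{c}$) yields
\[
-\int_{\partial B}\widetilde{\bpsi}_{0,2}\,ds=-c\omega_1^2\bigl(-|B|\,\bF(\bz)\bigr)=\omega^2|B|\,\bF(\bz),
\]
which is exactly \eqref{eq:estmate1st01}. The main (and only nontrivial) obstacle is the step that relates $\int_{\partial B}\mathcal{P}_B[\bvarphi]\,ds$ to a volume integral via Lemma~\ref{le:imp01}; one has to be careful that $\mathcal{P}_B$ really coincides with the interior conormal trace of $\mathcal{I}_B$, but the mild singularity of $\mathbf{\Lambda}$ and standard potential-theoretic regularity make this immediate.
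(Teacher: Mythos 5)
Your proof follows the paper's argument step for step: integrate the defining equation for $\widetilde{\bpsi}_{0,2}$, use $\int_{\partial B}\Kcal^*_B[\bvarphi]\,ds=\tfrac12\int_{\partial B}\bvarphi\,ds$ (from Lemma~\ref{prop:spectrum_K}) to collapse the left side and annihilate the $\bigl(-\tfrac{I}{2}+\Kcal^*_B\bigr)\Scal^{-1}_B\mathcal{I}_B$ term, and then convert $\int_{\partial B}\mathcal{P}_B$ to a volume integral via Green's formula and Lemma~\ref{le:imp01}, exactly as in \eqref{eq:der102}--\eqref{eq:der103}. One small terminology slip: $\mathcal{I}_B[\bvarphi]$ in \eqref{eq:definition_operator_I} is a boundary (single-layer-type) potential over $\partial B$, not a volume potential, although the regularity point you raise --- that $\mathcal{P}_B$ is the jump-free conormal trace of $\mathcal{I}_B$ because $\nabla\mathbf{\Lambda}$ is bounded --- is precisely the right justification, and is in fact left implicit in the paper.
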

\begin{proof}
From \eqnref{eq:psi_02} one has
\begin{equation}\label{eq:der101}
 (c-1)\left(\Kcal^*_B - \kappa_c I\right)[\widetilde{\bpsi}_{0,2}]=-c\omega_1^2\left(\mathcal{P}_B- \left(-\frac{I}{2} + \Kcal^*_B\right)\Scal^{-1}_B \mathcal{I}_B \right)\left[ \Scal^{-1}_B\left[ \bF(\bz)\right]\right].
\end{equation}
Similar to the proof in Lemma \ref{le:asym0101}, one has
\beq\label{eq:der102}
\int_{\partial B} \widetilde{\bpsi}_{0, 2}(\widetilde{\by})ds(\widetilde{\by})=c\omega_1^2\int_{\p B} \mathcal{P}_B\left[ \Scal^{-1}_B\left[ \bF(\bz)\right]\right](\widetilde{\by})ds(\widetilde{\by}).
\eeq
By using the definition of $\mathcal{P}_B$ in \eqnref{eq:definition_operator_P}, integration by parts and \eqnref{eq:leimp01} one obtains
\beq\label{eq:der103}
\begin{split}
\int_{\p B} \mathcal{P}_B\left[ \Scal^{-1}_B\left[ \bF(\bz)\right]\right](\widetilde{\by})ds(\widetilde{\by})&=\int_{\p B}\frac{\p}{\p \bnu}\mathcal{I}_B\left[ \Scal^{-1}_B\left[ \bF(\bz)\right]\right](\widetilde{\by})ds(\widetilde{\by})\\
&=\int_{B}(\mathcal{L}_{\lambda,\mu}\mathcal{I}_B)\left[ \Scal^{-1}_B\left[ \bF(\bz)\right]\right](\widetilde{\by})d\widetilde{\by}\\
&=-\int_{B}\mathcal{S}_B\left[ \Scal^{-1}_B\left[ \bF(\bz)\right]\right](\widetilde{\by})d\widetilde{\by}=- |B|\bF(\bz).
\end{split}
\eeq
By substituting \eqnref{eq:der103} into \eqnref{eq:der102} one thus has \eqnref{eq:estmate1st01}.

The proof is complete.
\end{proof}

\begin{lem}\label{le:axibeta1}
Let $\widetilde{\bpsi}_{\bbeta, 0}$ be defined in \eqnref{eq:phipsi01}. Then for $|\bbeta|=1$, one has
\begin{equation}\label{eq:psibeta1}
  \widetilde{\bpsi}_{\bbeta,0}=-\left(\Kcal^*_B-\kappa_c I\right)^{-1}\Big[\frac{\partial}{\partial \bnu}\widetilde{\by}^{\bbeta} \partial^{\bbeta}\bF(\bz)\Big].
\end{equation}
\end{lem}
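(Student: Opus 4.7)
The plan is to obtain the stated identity directly from the general expression for $\widetilde{\bpsi}_{\bbeta,0}$ that was already derived in Proposition~\ref{prop:estimate_solution}, by exploiting a key rigidity feature of the case $|\bbeta|=1$: the quantity $\bu(\widetilde{\by}):=\widetilde{\by}^{\bbeta}\partial^{\bbeta}\bF(\bz)$ is a linear (in fact affine) function of $\widetilde{\by}$ times a constant vector, so $\mathcal{L}_{\lambda,\mu}\bu=0$ in $B$ (only second derivatives appear in the Lam\'e operator). This should collapse the two bracketed terms in \eqref{eq:psi_beta_0} into a single clean expression.

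Concretely, I would begin by recalling
\[
\widetilde{\bpsi}_{\bbeta,0}=\frac{1}{c-1}(\Kcal^*_B-\kappa_c I)^{-1}\Big[\tfrac{\partial}{\partial\bnu}\widetilde{\by}^{\bbeta}\partial^{\bbeta}\bF(\bz)-c\big(-\tfrac{I}{2}+\Kcal^*_B\big)\Scal^{-1}_B[\widetilde{\by}^{\bbeta}\partial^{\bbeta}\bF(\bz)]\Big],
\]
and then prove the auxiliary identity
\[
\big(-\tfrac{I}{2}+\Kcal^*_B\big)\Scal^{-1}_B[\bu|_{\partial B}]=\tfrac{\partial\bu}{\partial\bnu}\Big|_{\partial B}
\]
for the specific $\bu$ above. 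For this I would set $\bvarphi_0:=\Scal^{-1}_B[\bu|_{\partial B}]\in H^{-1/2}(\partial B)^3$; then $\Scal_B[\bvarphi_0]$ and $\bu$ solve the same interior Dirichlet problem for $\mathcal{L}_{\lambda,\mu}$ in $B$, and by the uniqueness stated in the proof of Lemma~\ref{prop:property_K_star} they agree throughout $B$. Applying the interior jump relation~\eqref{eq:jump_single} to $\Scal_B[\bvarphi_0]$ then gives the desired identity, since $\frac{\partial}{\partial\bnu}\Scal_B[\bvarphi_0]|_{-}=(-\tfrac{I}{2}+\Kcal^*_B)[\bvarphi_0]$.

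Plugging this into the formula, the two terms in the bracket combine as $(1-c)\tfrac{\partial}{\partial\bnu}\widetilde{\by}^{\bbeta}\partial^{\bbeta}\bF(\bz)$, and the factor $1/(c-1)$ in front turns this into $-\tfrac{\partial}{\partial\bnu}\widetilde{\by}^{\bbeta}\partial^{\bbeta}\bF(\bz)$ inside the inverse, yielding exactly~\eqref{eq:psibeta1}.

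The only real obstacle is the small harmonic-extension step, namely identifying $\Scal_B[\bvarphi_0]$ with $\bu$ inside $B$; everything else is algebraic manipulation of the already-derived formula. This step is routine once one observes $\mathcal{L}_{\lambda,\mu}\bu=0$ and invokes uniqueness of the Lam\'e Dirichlet problem in $B$, both of which have been used explicitly in the proof of Lemma~\ref{prop:property_K_star}. No new tools beyond Proposition~\ref{prop:estimate_solution}, the jump relation~\eqref{eq:jump_single}, and the invertibility of $\Scal_B$ are needed.
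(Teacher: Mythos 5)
Your proposal is correct and follows essentially the same route as the paper: starting from \eqref{eq:psi_beta_0}, observing that $\mathcal{L}_{\lambda,\mu}\big(\widetilde{\by}^{\bbeta}\partial^{\bbeta}\bF(\bz)\big)=0$ when $|\bbeta|=1$, identifying $\Scal_B\big[\Scal_B^{-1}[\widetilde{\by}^{\bbeta}\partial^{\bbeta}\bF(\bz)]\big]$ with $\widetilde{\by}^{\bbeta}\partial^{\bbeta}\bF(\bz)$ inside $B$ by uniqueness of the Dirichlet problem, and then invoking the interior jump relation to collapse the bracket in \eqref{eq:psi_beta_0}. The algebra $(1-c)/(c-1)=-1$ then gives \eqref{eq:psibeta1}, exactly as in the paper's proof.
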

\begin{proof}
Recall the formula for $\widetilde{\bpsi}_{\bbeta,0}$ in \eqnref{eq:psi_beta_0}. For $|\bbeta|=1$, it can be easily verified that
\beq\label{eq:axitmp01}
\mathcal{L}_{\lambda,\mu}\big(\widetilde{\by}^{\bbeta} \partial^{\bbeta}\bF(\bz)\big)=0 \quad \mbox{in} \quad \RR^3.
\eeq
Define
$$
\bvarphi:=\Scal^{-1}_B\left[\widetilde{\by}^{\bbeta} \partial^{\bbeta} \bF(\bz)\right] \quad \mbox{on} \quad \p B.
$$
Then one has
\beq\label{eq:axitmp02}
\Scal_B[\bvarphi]=\widetilde{\by}^{\bbeta} \partial^{\bbeta} \bF(\bz) \quad \mbox{on} \quad \p B.
\eeq
Together with \eqnref{eq:axitmp01} one can show that
\beq\label{eq:axitmp03}
\Scal_B[\bvarphi]=\widetilde{\by}^{\bbeta} \partial^{\bbeta} \bF(\bz) \quad \mbox{in} \quad B.
\eeq
Thus one has from \eqnref{eq:jump_single} and \eqnref{eq:axitmp03} that
\beq\label{eq:axitmp04}
\begin{split}
\left(-\frac{I}{2} + \Kcal^*_B\right)\Scal^{-1}_B\left[\widetilde{\by}^{\bbeta} \partial^{\bbeta} \bF(\bz)\right]&=\left(-\frac{I}{2} + \Kcal^*_B\right)[\bvarphi]\\
&=\frac{\partial}{\partial \bnu} \Scal_B[\bvarphi]|_{-}(\bx)
=\frac{\partial}{\partial \bnu} \widetilde{\by}^{\bbeta} \partial^{\bbeta} \bF(\bz).
\end{split}
\eeq
Together with \eqnref{eq:psi_beta_0}, one finally has \eqnref{eq:psibeta1}.

The proof is complete.
\end{proof}
Based on the above results, we can finally show the far-field expansion for the displacement $\bu(\bx)$ as follows
\begin{thm}\label{th:farfexp01}
Let $\bu(\bx)$ be the solution to the system \eqnref{eq:def01}-\eqref{eq:radiating}. Then for $\bx\in\mathbb{R}^3\backslash\overline{D}$, there holds
\begin{equation}\label{eq:farfexp0101}
   \bu(\bx) =\bF(\bx) +\delta^3\sum_{j=1}^3 \sum_{|\balpha|=1} \sum_{|\bbeta|=1} \partial^{\balpha} \bGamma^{\omega}(\bx-\bz) \partial^{\bbeta}\bF_j(\bz) \mathbf{M}_{\alpha,\beta}^j+\mathcal{O}(\delta^4),
\end{equation}
where the Elastic Moment Tensor (EMT) $(\mathbf{M}_{\alpha,\beta}^j)$ is defined by
\beq\label{eq:defPolar01}
\mathbf{M}_{\alpha,\beta}^j=\int_{\partial B} \widetilde{\by}^{\balpha} \left(\Kcal^*_B-\kappa_c I\right)^{-1}\left[\frac{\partial}{\partial \bnu}\widetilde{\by}^{\bbeta} \be_j\right]ds(\widetilde{\by}),
\eeq
and
\[
  \partial^{\bbeta}\bF(\bz) =(\partial^{\bbeta}\bF_1(\bz), \partial^{\bbeta}\bF_2(\bz), \partial^{\bbeta}\bF_3(\bz)).
\]
\end{thm}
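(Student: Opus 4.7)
The plan is to start from the already-derived expansion \eqref{eq:final_scattering} and show that its three $\delta^3$-terms collapse into the single dipole--dipole expression in the theorem, by (i) combining the two ``monopole'' terms into a contribution proportional to $\bff(\bz)$, which vanishes because $\bff$ is supported in $\mathbb{R}^3\setminus\overline{D}$, and (ii) rewriting the remaining ``dipole'' term in terms of the EMT $\mathbf{M}_{\alpha,\beta}^{j}$.

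Concretely, I would first substitute Lemma~\ref{le:asym0102} into the first term of \eqref{eq:final_scattering} and Lemma~\ref{le:asym0101} into the second (pulling the $\bbeta$-independent prefactor $\tfrac{1}{2}$ out of the sum). This produces the combined monopole contribution
\[
-\delta^{3}|B|\,\bGamma^{\omega}(\bx-\bz)\bigl(\omega^{2}\bF(\bz)+(\mathcal{L}_{\lambda,\mu}\bF)(\bz)\bigr).
\]
By \eqref{eq:LF}, the bracket equals $\bff(\bz)$, and since $\bff$ is compactly supported in $\mathbb{R}^{3}\setminus\overline{D}$ while $\bz\in D$, it vanishes identically. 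This cancellation is what I expect to be the delicate point of the argument: it requires the source equation satisfied by $\bF$ to be used at the \emph{inner} evaluation point $\bz$ (not at a far-field point), and it is what forces the leading far-field correction to appear only at order $\delta^{3}$ rather than earlier, leaving behind no purely monopole contribution.

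With the two monopole pieces eliminated, only the third summand of \eqref{eq:final_scattering} survives. I would then invoke Lemma~\ref{le:axibeta1} to replace $\widetilde{\bpsi}_{\bbeta,0}$ by $-(\Kcal^{*}_{B}-\kappa_{c}I)^{-1}\bigl[\tfrac{\partial}{\partial\bnu}\widetilde{\by}^{\bbeta}\partial^{\bbeta}\bF(\bz)\bigr]$. Since $\partial^{\bbeta}\bF(\bz)$ is a constant vector, decomposing $\partial^{\bbeta}\bF(\bz)=\sum_{j=1}^{3}\partial^{\bbeta}\bF_{j}(\bz)\be_{j}$ and using the linearity of $(\Kcal^{*}_{B}-\kappa_{c}I)^{-1}$ allows the scalar coefficients $\partial^{\bbeta}\bF_{j}(\bz)$ to be pulled outside the surface integral. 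The remaining integrals are precisely the components $\mathbf{M}_{\alpha,\beta}^{j}$ defined in \eqref{eq:defPolar01}, and the minus sign from Lemma~\ref{le:axibeta1} cancels the minus sign in front of the third term of \eqref{eq:final_scattering}. Collecting the result and absorbing all neglected contributions into $\mathcal{O}(\delta^{4})$ then yields the desired far-field expansion \eqref{eq:farfexp0101}.
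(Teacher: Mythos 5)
Your proposal is correct and follows essentially the same route as the paper's proof: starting from \eqref{eq:final_scattering}, combining Lemmas~\ref{le:asym0102} and~\ref{le:asym0101} with \eqref{eq:LF} and the support condition on $\bff$ to kill the monopole piece, then applying Lemma~\ref{le:axibeta1} and the linearity of $\left(\Kcal^*_B-\kappa_c I\right)^{-1}$ to identify the surviving dipole term with the EMT \eqref{eq:defPolar01}. The observation about why the source equation must be evaluated at $\bz\in D$ is a nice clarification, but the mechanics match the paper exactly.
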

\begin{proof}
With the help of Lemmas \ref{le:asym0101} and \ref{le:asym0102}, one can obtain that
\[
\begin{split}
 &\int_{\partial B} \widetilde{\bpsi}_{0, 2}(\widetilde{\by})ds(\widetilde{\by}) +\sum_{|\bbeta|=2} \frac{1}{2} \int_{\partial B} \widetilde{\bpsi}_{\bbeta,0}(\widetilde{\by})ds(\widetilde{\by}) \\
=&|B|\big(-\omega^2 \mathbf{F}(\bz)- (\mathcal{L}_{\lambda,\mu}\mathbf{F})(\bz)\big) =0,
 \end{split}
\]
where the second identity follows from  \eqnref{eq:LF} and the assumption that $\mathbf{f}$ is compactly supported in $\RR^3\setminus\overline{D}$. Therefore one can derive \eqref{eq:farfexp0101} thanks to \eqref{eq:final_scattering} and the linearity of the operator $\left(\Kcal^*_B-\kappa_c I\right)^{-1}$.

This proof is complete.
\end{proof}
We remark that in \cite{AKNT:02} (see also \cite{Amm3}), the authors proved the asymptotic expansion for static elastic problem in the presence of small inclusions. Our asymptotic expansion result \eqnref{eq:farfexp0101} is in accordance with their results but has more details in describing the EMT in \eqnref{eq:defPolar01}. This helps us to analyze the EMT in a much more elaborate way and so the phenomenon of polariton resonance.

\section{Mathematical analysis of surface polariton resonance}
\subsection{Resolvent analysis}
From the analysis of the last section, we know that the polariton resonance only occurs when the energy of the EMT, $(\mathbf{M}_{\alpha,\beta}^j)$, blows-up (see \eqnref{eq:farfexp0101} and \cite{ADM}). From \eqnref{eq:defPolar01}, it then remains to analyze the resolvent of the Neumann-Poincar\'e operator $\Kcal^*_B$. We have the following auxiliary result

\begin{thm}\label{th:resolv01}
For the operator $\Kcal_B^*$, we have the following resolvent estimate
\begin{equation}\label{eq:reso_Kstar}
\|\left(k_{\alpha} I-\Kcal_B^*\right)^{-1}\|_{\mathcal{L}(H^{-1/2}(\partial B)^3,H^{-1/2}(\partial B)^3)}\leq  \frac{C}{d(h(k_{\alpha}),\sigma(\mathcal{G}_B))},
\end{equation}
where
\begin{equation}\label{eq:hN}
 h(k_{\alpha})=k_{\alpha}(k_{\alpha}^2-k_0^2), \quad\mbox{and}\quad  \mathcal{G}_B=\Kcal_B^*((\Kcal_B^*)^2-k_0^2I),
\end{equation}
with
\[
k_0=\frac{\mu}{2(2\mu+\lambda)},
\]
and the constant $C$ depends on the $\Kcal_B^*$, $k_{\alpha}$ and $k_0$. In \eqref{eq:reso_Kstar}, $\sigma(\mathcal{G}_B)$ is the spectrum of the operator $\mathcal{G}_B$ and $d(h(k_{\alpha}),\sigma(\mathcal{G}_B))$ is the distance between $h(k_{\alpha})$ and $\sigma(\mathcal{G}_B)$.
\end{thm}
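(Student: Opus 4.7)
The plan is to reduce the resolvent estimate for $\Kcal_B^*$ to one for $\mathcal{G}_B$ through a purely algebraic polynomial factorization, and then to establish the sharp resolvent bound for $\mathcal{G}_B$ via a Plemelj-type symmetrization that turns it into a self-adjoint operator in an equivalent inner product.

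First I would verify the identity, obtained by straightforward expansion and the fact that $\Kcal_B^*$ commutes with every polynomial in itself,
\begin{equation*}
h(k_\alpha)\, I - \mathcal{G}_B \;=\; (k_\alpha I - \Kcal_B^*)\, P(\Kcal_B^*), \qquad P(x) := x^2 + k_\alpha\, x + k_\alpha^2 - k_0^2 .
\end{equation*}
Since $\Kcal_B^*$ is bounded on $H^{-1/2}(\partial B)^3$, the operator $P(\Kcal_B^*)$ is bounded with norm controlled by a constant $C_1 = C_1(\|\Kcal_B^*\|, k_\alpha, k_0)$. Provided $h(k_\alpha) \notin \sigma(\mathcal{G}_B)$, one may therefore invert the factorization and write
\begin{equation*}
(k_\alpha I - \Kcal_B^*)^{-1} \;=\; P(\Kcal_B^*)\, \bigl(h(k_\alpha) I - \mathcal{G}_B\bigr)^{-1}.
\end{equation*}

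Next I would invoke the symmetrization of the elastic Neumann--Poincar\'e operator. The Plemelj-type identity $\Scal_B \Kcal_B^* = \Kcal_B \Scal_B$ extends to the Lam\'e setting, which means that $\Kcal_B^*$ is self-adjoint with respect to the bilinear pairing $\langle \bvarphi, \bpsi \rangle_\star := - \langle \bvarphi,\, \Scal_B \bpsi \rangle_{H^{-1/2}, H^{1/2}}$. Using the invertibility of $\Scal_B$ recalled in the text together with the negativity of $\Scal_B$ (modulo at most a finite-dimensional subspace related to rigid motions), this pairing is an inner product on $H^{-1/2}(\partial B)^3$ equivalent to the intrinsic one. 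Because $\mathcal{G}_B = \Kcal_B^*\bigl((\Kcal_B^*)^2 - k_0^2 I\bigr)$ is a real polynomial in $\Kcal_B^*$, it is likewise self-adjoint for $\langle \cdot, \cdot \rangle_\star$, and the spectral theorem for bounded self-adjoint operators delivers the sharp resolvent bound
\begin{equation*}
\bigl\|(h(k_\alpha) I - \mathcal{G}_B)^{-1}\bigr\|_\star \;\leq\; \frac{1}{d(h(k_\alpha), \sigma(\mathcal{G}_B))}.
\end{equation*}
Transferring this back to the $H^{-1/2}$-norm via the norm equivalence and combining with the bound on $\|P(\Kcal_B^*)\|$ yields \eqref{eq:reso_Kstar} with the advertised dependence of $C$ on $\Kcal_B^*$, $k_\alpha$ and $k_0$.

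The main obstacle is the symmetrization step, as the elastic single-layer operator $\Scal_B$ is substantially more delicate than its scalar Laplace analogue. One must either restrict to the orthogonal complement of its finite-dimensional kernel and run a Fredholm-type argument there, or add a suitable finite-rank stabilizing perturbation so that $-\Scal_B$ becomes strictly positive and the induced quadratic form is coercive on $H^{-1/2}(\partial B)^3$. Once this functional-analytic setup is secured, the remaining steps are essentially algebraic, and the final constant $C$ simply absorbs the norm-equivalence constants together with $\|P(\Kcal_B^*)\|$.
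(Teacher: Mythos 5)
Your proposal follows essentially the same route as the paper: the same cubic factorization $h(k_\alpha)I - \mathcal{G}_B = (k_\alpha I - \Kcal_B^*)\bigl((\Kcal_B^*)^2 + k_\alpha\Kcal_B^* + (k_\alpha^2-k_0^2)I\bigr)$, the same Plemelj-type symmetrization via the Calder\'on identity $\Scal_B\Kcal_B^*=\Kcal_B\Scal_B$ making $\mathcal{G}_B$ self-adjoint in the $-\langle\cdot,\Scal_B\cdot\rangle$ inner product, and the same resolvent bound for the resulting self-adjoint (in fact polynomially compact) operator. The obstacle you flag at the end is not actually present here: in three dimensions the elastostatic single-layer operator $\Scal_B:H^{-1/2}(\partial B)^3\to H^{1/2}(\partial B)^3$ is invertible and $-\Scal_B$ induces a norm equivalent to the $H^{-1/2}$ norm with no finite-dimensional kernel to remove (the paper invokes this from \cite{AKKY}), so no Fredholm reduction or stabilizing perturbation is needed.
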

\begin{proof}
First we introduce the Hilbert space $\mathcal{H}$ with the following inner product
\beq\label{eq:innerdef01}
(\mathbf{g},\mathbf{h})_{\mathcal{H}}:=-\langle\mathbf{g},\Scal_B[\mathbf{h}]\rangle, \quad \mathbf{g},\mathbf{h}\in H^{-1/2}(\partial B)^3,
\eeq
where $\langle.,.\rangle$ is the pair between $H^{-1/2}(\partial B)^3$ and $H^{1/2}(\partial B)^3$, and the operator $\Scal_B$ is defined on $\partial B$.  Then the operator $\Kcal_B^*$ is a self-adjoint operator in the space $\mathcal{H}$.  Furthermore, since the norm $\|\cdot\|_{\mathcal{H}}$ induced by the inner product $(.,.)_{\mathcal{H}}$ is equivalent to $\|\cdot\|_{H^{-1/2}(\partial B)^3}$, we thus denote the norm $\|\cdot\|_{\mathcal{H}}$ by $\|\cdot\|_{H^{-1/2}(\partial B)^3}$ without any ambiguity. We refer to \cite{AKKY} for more details.

One can find that the operator $\mathcal{G}_B$ defined in \eqref{eq:hN} is a compact operator on $\mathcal{H}$ (see \cite{AKM1}). By using the  Calder\'on identity
$$
\Scal_B\Kcal_B^*=\Kcal_B\Scal_B
$$
one can find that
\beq
\begin{split}
(\mathcal{G}_B[\mathbf{g}],\mathbf{h})_{\mathcal{H}}&=-\langle\mathcal{G}_B[\mathbf{g}],\Scal_B[\mathbf{h}]\rangle
=-\langle\mathbf{g}, \mathcal{G}_B^*\Scal_B[\mathbf{h}]\rangle\medskip\\
&= -\langle\mathbf{g}, \Scal_B\mathcal{G}_B[\mathbf{h}]\rangle=(\mathbf{g},\mathcal{G}_B[\mathbf{h}])_{\mathcal{H}},
\end{split}
\eeq
where $\mathcal{G}_B^*:=\Kcal_B((\Kcal_B)^2-k_0^2I)$.
Thus it is a self-adjoint compact operator on $\mathcal{H}$ and therefore from \cite{GM} one can derive the resolvent for $\mathcal{G}_B$ by
\[
 \|\left(h(k_{\alpha}) I-\mathcal{G}_B\right)^{-1}\|_{\mathcal{L}(H^{-1/2}(\partial B)^3,H^{-1/2}(\partial B)^3)}\leq \frac{C}{d(h(k_{\alpha}),\sigma(\mathcal{G}_B))}.
\]
Direct calculation gives that
\[
 \left(\Kcal_B^*-k_{\alpha} I\right)\left((\Kcal_B^*)^2+k_{\alpha} \Kcal_B^*+(k_{\alpha}^2-k_0^2) I\right)=h(k_{\alpha}) I-\mathcal{G}_B,
\]
and therefore
\beq\label{eq:thadd0101}
 \left(\Kcal_B^*-k_{\alpha} I\right)^{-1}=\left(h(k_{\alpha}) I-\mathcal{G}_B\right)^{-1} \left((\Kcal_B^*)^2+k_{\alpha} \Kcal_B^*+ (k_{\alpha}^2-k_0^2) I\right).
\eeq
Finally one can conclude that
\[
 \|\left(k_{\alpha} I-\Kcal_B^*\right)^{-1}\|_{\mathcal{L}(H^{-1/2}(\partial B)^3,H^{-1/2}(\partial B)^3)}\leq \frac{C}{d(h(k_{\alpha}),\sigma(\mathcal{G}_B))},
\]
and the proof is completed.
\end{proof}

From Theorems \ref{th:resolv01} and \ref{th:farfexp01}, one can find that the spectra of the Neumann-Poincar\'e operator $\Kcal_B^*$ are attributed to the polariton resonance of the elastic problem, which we shall discuss in more details in the next subsection.

\subsection{Polariton resonance for elastic nanoparticles}

\begin{thm}\label{th:main02}
Let $\bu(\bx)$ be the solution to the system \eqnref{eq:def01}-\eqref{eq:radiating}. Suppose that $c=c_0+ i\tau$, where $\tau\in\mathbb{R}_+$ is sufficiently small and $c_0$ is chosen such that the quantity $h_{c_0}$
$$h_{c_0}:=k_{c_0}(k_{c_0}^2-k_0^2),$$
with
$$ k_{c_0}=\frac{c_0+1}{2(c_0-1)}\quad \mbox{and} \quad k_0=\frac{\mu}{2(2\mu+\lambda)},$$
is the eigenvalue of the operator $\mathcal{G}_B$ with respect to an eigenfunction $\bphi^*$. Suppose further that
\beq\label{eq:thmain02021}
(\bphi^*, \bphi_F)_{\mathcal{H}}\neq 0,
\eeq
where
$$
\bphi_F(\widetilde{\by})=\left((\Kcal_B^*)^2+k_c \Kcal_B^*+ (k_c^2-k_0^2) I\right)\Big[\Big(\frac{\partial}{\partial \bnu}\bF\Big)(\bz)\Big](\widetilde{\by}),
$$
and
$$
\bphi_G(\bx, \widetilde{\by})=\sum_{|\alpha|=1}\p^{\alpha}  \bGamma^{\omega}(\bx-\bz)\widetilde{\by}^{\balpha}.
$$
Then one has
\beq\label{eq:thmain0201}
|\bu(\bx)|\sim \delta^3\tau^{-1}, \quad \mbox{a.e.} \quad \bx\in \RR^3\setminus \overline{D}.
\eeq
\end{thm}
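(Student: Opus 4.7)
The plan is to combine the far-field expansion of Theorem~\ref{th:farfexp01} with the resolvent identity \eqref{eq:thadd0101} and a spectral decomposition of $\mathcal{G}_B$ in the Hilbert space $\mathcal{H}$ in order to isolate the resonant mode attached to the eigenvalue $h_{c_0}$. Concretely, I first substitute \eqref{eq:thadd0101} into the EMT formula \eqref{eq:defPolar01}, which factors $(\Kcal_B^*-\kappa_c I)^{-1}$ as $(h(\kappa_c)I-\mathcal{G}_B)^{-1}\circ Q_{\kappa_c}(\Kcal_B^*)$ with $Q_{\kappa_c}(t):=t^2+\kappa_c t+(\kappa_c^2-k_0^2)$. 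Plugging this into \eqref{eq:farfexp0101} and interchanging the finite sums over $\alpha,\beta,j$ with the $\widetilde{\by}$-integral and the resolvent, the $\mathcal{O}(\delta^3)$ correction collapses into
$$\bu(\bx)-\bF(\bx)=\delta^3\int_{\partial B}\bphi_G(\bx,\widetilde{\by})\,\bigl(h(\kappa_c)I-\mathcal{G}_B\bigr)^{-1}[\bphi_F](\widetilde{\by})\,ds(\widetilde{\by})+\mathcal{O}(\delta^4),$$
with $\bphi_G$ and $\bphi_F$ precisely those of the statement. The entire $\tau$-dependence of the leading term is now concentrated in the resolvent.

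Since $\mathcal{G}_B$ is compact and self-adjoint on $\mathcal{H}$ (as already exploited in Theorem~\ref{th:resolv01}), the spectral theorem supplies a real eigensystem $\{(\lambda_n,\bphi_n)\}$ with $\bphi^*$ lying in the eigenspace of $\lambda_{n_0}=h_{c_0}$. A direct expansion of $\kappa_c=(c+1)/(2(c-1))$ about $c=c_0$ gives $\kappa_c-k_{c_0}=-\mathrm{i}\tau/(c_0-1)^2+\mathcal{O}(\tau^2)$, whence
$$h(\kappa_c)-h_{c_0}=(3k_{c_0}^2-k_0^2)\,\frac{-\mathrm{i}\tau}{(c_0-1)^2}+\mathcal{O}(\tau^2)$$
has magnitude exactly of order $\tau$ and nontrivial imaginary part (assuming the mild nondegeneracy $3k_{c_0}^2\neq k_0^2$). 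For every other eigenvalue $\lambda_n\neq h_{c_0}$, compactness of $\mathcal{G}_B$ furnishes a positive spectral gap, so $|h(\kappa_c)-\lambda_n|$ is bounded below uniformly for $\tau$ small. Using
$$\bigl(h(\kappa_c)I-\mathcal{G}_B\bigr)^{-1}[\bphi_F]=\sum_n\frac{(\bphi_F,\bphi_n)_{\mathcal{H}}}{h(\kappa_c)-\lambda_n}\,\bphi_n,$$
the $\bphi^*$-term therefore dominates, producing a $\tau^{-1}$ singularity whose amplitude $(\bphi_F,\bphi^*)_{\mathcal{H}}$ is nonzero by hypothesis \eqref{eq:thmain02021}.

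Combining the two steps,
$$\bu(\bx)-\bF(\bx)=\delta^3\,\frac{(\bphi_F,\bphi^*)_{\mathcal{H}}}{h(\kappa_c)-h_{c_0}}\int_{\partial B}\bphi_G(\bx,\widetilde{\by})\,\bphi^*(\widetilde{\by})\,ds(\widetilde{\by})+\mathcal{O}(\delta^3),$$
from which \eqref{eq:thmain0201} follows at every $\bx\in\mathbb{R}^3\setminus\overline{D}$ at which the integral against $\bphi_G$ is nonzero. Because $\bGamma^{\omega}(\bx-\bz)$ is real-analytic in $\bx\in\mathbb{R}^3\setminus\{\bz\}$, the map $\bx\mapsto\int_{\partial B}\bphi_G(\bx,\widetilde{\by})\,\bphi^*(\widetilde{\by})\,ds(\widetilde{\by})$ is real-analytic on $\mathbb{R}^3\setminus\overline{D}$; hence its zero set has Lebesgue measure zero unless it vanishes identically, and the latter is excluded because $\bphi^*\not\equiv 0$ generates a nontrivial outgoing single-layer field by the radiation condition.

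The principal obstacle I anticipate is the bookkeeping in the first step: collapsing the triple sum over $(\alpha,\beta,j)$ in \eqref{eq:farfexp0101} cleanly into the $\mathcal{H}$-pairing displayed above, with careful handling of the matrix/vector structure of $\partial^\alpha\bGamma^\omega$, $\partial^\beta\bF_j(\bz)$ and $\partial/\partial\bnu(\widetilde{\by}^\beta\be_j)$, together with the uniform spectral-gap control in the second step, which requires $h_{c_0}\neq 0$ and the isolation of $h_{c_0}$ as a discrete eigenvalue of the compact operator $\mathcal{G}_B$.
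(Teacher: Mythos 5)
Your proposal is correct and follows essentially the same route as the paper: extract the $\delta^3$ term of the far-field expansion as an integral against $(\Kcal_B^*-\kappa_c I)^{-1}[(\partial\bF/\partial\bnu)(\bz)]$, factor that resolvent through \eqref{eq:thadd0101}, expand in the eigenbasis of the self-adjoint compact operator $\mathcal{G}_B$ on $\mathcal{H}$, and use \eqref{eq:thmain02021} to isolate a $\tau^{-1}$-singular contribution, finishing with an a.e.~nonvanishing argument for the kernel-weighted integral (the paper invokes unique continuation where you invoke real-analyticity, which is equivalent here). You are slightly more careful than the paper in explicitly flagging the nondegeneracy $3k_{c_0}^2\neq k_0^2$ needed to guarantee $|h(\kappa_c)-h_{c_0}|\sim\tau$ rather than higher order, a condition the paper tacitly assumes.
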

\begin{proof}
Firstly, one can rewrite the asymptotic expansion \eqnref{eq:farfexp0101} as follows
\beq\label{eq:thmain020101}
\bu(\bx) =\bF(\bx) +\delta^3\mathbf{P}(\bx)+\mathcal{O}(\delta^4),
\eeq
where $\mathbf{P}$ is defined by
\beq\label{eq:thmain0202}
\mathbf{P}(\bx):=\int_{\partial B} \bphi_G(\bx, \widetilde{\by})\left(\Kcal^*_B-\kappa_c I\right)^{-1}\left[\Big(\frac{\partial}{\partial \bnu}\bF\Big)(\bz)\right]ds(\widetilde{\by}).
\eeq
Recall that $\mathcal{G}_B$ defined in \eqnref{eq:hN} is a self-adjoint compact operator on $\mathcal{H}$, where $\mathcal{H}$ is a Hilbert space with functions in $H^{-1/2}(\partial B)^3$ and inner product defined by \eqnref{eq:innerdef01}. Thus the eigenfunctions $\{\bphi_n\}_{n=1}^{\infty}$, corresponding the eigenvalues $\{\lambda_n\}_{n=1}^{\infty}$ of the operator $\mathcal{G}_B$, form a norm basis on $\mathcal{H}$ and therefore the operator $\mathcal{G}_B$ admits the follwoing eigenvalue decompositions in $\mathcal{H}$, for $\bphi\in H^{-1/2}(\partial B)^3$
\beq\label{eq:thmain0203}
\mathcal{G}_B[\bphi]=\sum_{n=1}^{\infty} \kappa_n \bphi_n,
\eeq
where
\[
 \kappa_n=\lambda_n (\bphi_n,\bphi)_{\mathcal{H}}.
\]
Since $c=c_0 +i \tau$, one has
$$
\kappa_c=\frac{c+1}{2(c-1)}=k_c + \mathcal{O}(\tau).
$$
By using the relations in \eqnref{eq:thadd0101}, \eqnref{eq:thmain02021} and \eqnref{eq:thmain0203}, one then has
\beq\label{eq:thmain0204}
\begin{split}
\left(\Kcal^*_B-\kappa_c I\right)^{-1}\left[\Big(\frac{\partial}{\partial \bnu}\bF\Big)(\bz)\right]&=\left(h_c I-\mathcal{G}_B+\mathcal{O}(\tau)\right)^{-1}[\bphi_F]\\
&=C\left(h_c I-\mathcal{G}_B+\mathcal{O}(\tau)\right)^{-1}[\bphi^*] + \mathcal{O}(1)\\
&=\bphi^*\mathcal{O}(\tau^{-1})+\mathcal{O}(1).
\end{split}
\eeq
Since there holds
\[
(\mathcal{L}_{\lambda, \mu}+\omega^2) \mathbf{P}(\bx) =0 ,\quad \bx\in\RR^3\setminus\overline{D},
\]
by using the unique continuation principle, one has that
\[
\mathbf{P}(\bx)\neq 0, \quad \mbox{a.e.} \quad \bx \in\RR^3\setminus\overline{D}.
\]
By substituting \eqnref{eq:thmain0204} into \eqnref{eq:thmain0202} one thus has
$$
|\mathbf{P}(\bx)|\sim \tau^{-1}, \quad \mbox{a.e.} \quad \bx\in \RR^3\setminus\overline{D},
$$
which together with \eqnref{eq:thmain020101} readily implies \eqnref{eq:thmain0201}.

The proof is complete.
\end{proof}

\begin{rem}\label{rem:3.1}
We remark that if the polariton resonance occurs, the parameter $c$ defined in \eqref{eq:def02} should have negative real part, namely
\[
 \Re c=c_0<0.
\]
In fact, if $\Re c>0$, one can show from the definition of $\kappa_c$ in \eqref{eq:defkappa} that
\[
 \Re \kappa_c>\frac{1}{2}.
\]
Then in such a case, one cannot have the polarition resonance since
\[
  \|\left(\kappa_{c} I-\Kcal_B^*\right)^{-1}\|_{\mathcal{L}(H^{-1/2}(\partial B)^3,H^{-1/2}(\partial B)^3)}\leq C,
\]
which is due to the fact that the spectrum of the operator $\Kcal_B^*$ on $H^{-1/2}(\partial B)^3$ lies in $(-1/2, 1/2]$ (cf. \cite{AKKY}).
\end{rem}

\section*{Acknowledgment}
The work of Y. Deng was supported by NSF grant of China No. 11601528, NSF grant of Hunan No. 2017JJ3432 and No. 2018JJ3622, Innovation-Driven Project of Central South University, No. 2018CX041. The work of H. Liu was supported by the startup fund and FRG grants from Hong Kong Baptist University, Hong Kong RGC General Research Funds, 12302415 and 12302017.

%
%

\end{document}